\documentclass[11pt]{amsart}

\usepackage[all,cmtip]{xy}
\usepackage{amsmath}
\usepackage{amssymb}
\usepackage{mathtools}
\usepackage[T1]{fontenc}
\usepackage[english]{babel}
\usepackage[hidelinks]{hyperref}
\usepackage[marginpar=2.5cm]{geometry}
\usepackage{xcolor}\usepackage{marginnote}
\usepackage{amsrefs}

\newtheorem{thm}{Theorem}[section]

\newtheorem*{thm*}{Theorem}
\newtheorem{lem}[thm]{Lemma}

\newtheorem{prop}[thm]{Proposition}
\newtheorem*{prop*}{Proposition}

\newtheorem{cor}[thm]{Corollary}

\theoremstyle{definition}
\newtheorem{defn}[thm]{Definition}

\newtheorem{remark}[thm]{Remark}

\newtheorem{example}[thm]{Example}

\def\bb{\mathbb}

\def\bb{\mathbb}

\DeclareMathOperator{\Tr}{Tr}

\DeclareMathOperator{\OMAX}{OMAX}
\DeclareMathOperator{\OMIN}{OMIN}

\newcommand{\norm}[1]{\left\lVert #1 \right\rVert}

\newcommand\ip[2]{\left\langle #1\, , #2 \right\rangle}

\newcommand{\C}{\mathbb{C}}
\newcommand{\N}{\mathbb{N}}

\newcommand{\R}{\mathbb{R}}

\title[The Operator Systems of Hankel Matrices]{The Operator Systems of Hankel Matrices\\ and Complex Polynomials}

\author[T.~Sinclair]{Thomas Sinclair}
\address{Department of Mathematics, Purdue University\\ 150 N. University Street, West Lafayette, IN 47907-2067}
\email{tsincla@purdue.edu}

\author[S.~Qiu]{Shengwei Qiu}
\address{Department of Mathematics, Purdue University\\ 150 N. University Street, West Lafayette, IN 47907-2067}
\email{qiu221@purdue.edu}

\begin{document}

\begin{abstract}

We investigate the operator system structure of finite Hankel matrices and its connection with complex polynomials on the real line. Using polynomial matrix spectral factorization, we establish a unital complete order isomorphism between the operator system of $(n+1)\times(n+1)$ Hankel matrices and the dual of the operator system of polynomials of degree at most $2n$. We also show that the operator system of complex polynomials is minimal, while the Hankel operator system is maximal. As an application, we characterize positivity of block Hankel matrices with entries in a $C^*$-algebra in terms of completely positive maps on the polynomial operator system.

\end{abstract}

\maketitle

\section{Introduction}

Operator systems provide a fundamental framework for studying positivity in functional analysis and operator algebras. They capture the interplay between algebraic structure and order structure, allowing one to analyze completely positive maps, tensor products, and duality in a unified setting. In recent years, connections between operator systems and structured matrices have attracted increasing attention due to their applications in moment problems and harmonic analysis. The beautiful duality between operator systems of Toeplitz matrices and the spaces of truncated trigonometric polynomials as developed by Connes--van Suijlekom \cite{4} and Farenick \cite{5} is a motivating result in this direction for our study.

In this paper, we investigate the operator system structure of finite Hankel matrices and its relationship with the operator system of complex polynomials on the real line. Our main result establishes a unital complete order isomorphism between the matrix-ordered space of $(n+1)\times (n+1)$ Hankel matrices and the dual of the polynomial operator system of degree at most $2n$ on the real line. This identification provides a powerful framework for translating positivity conditions on Hankel matrices into positive linear functionals on polynomials and vice versa.

Analogously to the work of Farenick \cite{5}, a key tool in our analysis is the polynomial matrix spectral factorization, a matrix version of the Fej\'er–Riesz theorem, which enables us to connect positivity on the real line with operator system structures. Through this connection, we derive new insights into the structure of completely positive maps associated with Hankel matrices.
As an application of the isomorphism theorem, we study block Hankel matrices through the lens of operator system tensor products.

The paper is organized as follows. In Section 2, we review the necessary background on operator systems, duality, and tensor products. In Section 3, we introduce the operator systems of Hankel matrices and polynomials and prove the main isomorphism theorem. Section 4 develops an application to block Hankel matrices.

\section{Preliminaries}

In this paper, we assume that the reader is familiar with the concepts of matrix-ordered $*$-vector spaces, Archimedean matrix order units, unital complete order isomorphisms, minimal and maximal operator-system structures, $C^*$-envelopes, etc. See \cites{30,3,7,2,29}. We will give more precise references as the topics occur in the sequel.

\subsection{Duals of Operator Systems} 

Duality is an important structure that comes with an operator system and it plays a role in proving our main theorem. Given an operator system $S$, we write $S^d$ for the dual vector space. $S^d$ is equipped with an involution and a matrix order, which make it a matrix-ordered $*$-vector space. That is, 
\begin{enumerate}
\item For any $\phi \in S^d$, $s\in S$,
\[ 
    \phi: S \longrightarrow \C, \ \phi^*(s)= \overline{\phi(s^*)}.
\]
\item For $F =(f_{ij})\in M_n(S^d),$ define 
\[
    \phi_{F}: S \longrightarrow M_n(\C), \ \phi_{F}(s)= (f_{ij}(s)),
\]
$$\ C_n^d:=\{F: \phi_{F} \ \textup{is completely positive.}   \}.$$
\end{enumerate}

Equivalently, we may define the dual cones structure by
\[
C_n^d:=\{f:M_n(S) \longrightarrow \C : \textup{f is linear},\ f(C_n)\ge0  \}.
\]

For $S$ finite-dimensional, the equivalence of the above definitions gives the following canonical identifications:
$$M_n(S^d) \cong L(S^{dd}, M_n) \cong L(S, M_n) \cong (M_n(S))^d.$$

It is known that $S^d$ is always an operator system when $S$ is finite-dimensional, because the existence of an Archimedean matrix order unit is guaranteed \cite{7}. However, the order unit of $S^d$ is not unique. If a positive linear functional $\phi: W \longrightarrow \C$, where $W$ is an ordered $\ast$-vector space with an Archimedean order unit, satisfies $\phi(e)=1$, then we call it a \emph{state}. If it has a trivial kernel on positive elements, i.e., $\forall x\in W^+\setminus\{0\}$, $\phi(x)>0$,  then we call it a \emph{faithful state}. We denote $\mathbb{S}(W)$ as the space of states of W. By Proposition 2.17 in \cite{4}, for any finite-dimensional operator system $S$, any faithful state on $S$ can serve as an Archimedean order unit for $S^d$.

By the preceding observation, we get an efficient way to characterize an Archimedean order unit of the dual operator system. The following two theorems are from \cite{7}:

\begin{thm}\label{4}

Given any finite-dimensional operator system S, $(S^{d})^{d}$ is completely order isomorphic to $S$, i.e., $(S^{d})^{d} \cong S$.

\end{thm}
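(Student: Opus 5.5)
We will prove that the canonical evaluation map $\iota: S \to (S^d)^d$, $\iota(s)(\phi)=\phi(s)$, is a unital complete order isomorphism, where $(S^d)^d$ carries the order unit $\hat e := \iota(e)$. Since $S$ is finite-dimensional, $\iota$ is a linear bijection. It is also $*$-preserving: $\iota(s^*)(\phi)=\phi(s^*)=\overline{\phi^*(s)}$, which equals $\iota(s)^*(\phi)$ by the involution on the dual. So everything reduces to showing that $\iota$ and its inverse preserve the matrix cones at every level $n$. We will also check that $\hat e$ is an Archimedean matrix order unit for $(S^d)^d$. This follows once $\iota$ is shown to be a complete order isomorphism, since the order-unit and Archimedean properties transfer along such isomorphisms.

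\emph{Positivity of $\iota$.} Take $X=(x_{ij})\in M_n(S)^+$ and write $\iota_n(X)=(\iota(x_{ij}))$. We use the second description of the dual cones, $C_n^{dd}$ viewed as functionals on $M_m(S^d)$. Then $\iota_n(X)$ is positive precisely when, for every $m$ and every $F=(f_{kl})\in C_m^d$, the pairing $\sum_{i,j,k,l}\ldots$ is nonnegative. Concretely, this means $\phi_F^{(n)}(X)=(f_{kl}(x_{ij}))\in M_{mn}^+$, after the canonical shuffle. This holds because $\phi_F$ is completely positive. Thus $\iota_n(C_n)\subseteq C_n^{dd}$.

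\emph{Positivity of $\iota^{-1}$.} This is the main obstacle. Suppose $X\in M_n(S)_h$ with $X\notin C_n$. Since $S$ is finite-dimensional and the cone $C_n$ is closed (by the Archimedean property), Hahn--Banach separation gives a linear functional $f$ on $M_n(S)$ with $f(C_n)\ge 0$ and $f(X)<0$.

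By the identification $(M_n(S))^d\cong M_n(S^d)$ stated above, $f$ corresponds to some $F\in C_n^d$, so $\phi_F:S\to M_n$ is completely positive. Concretely, $f(X)=\langle \phi_F^{(n)}(X)\xi,\xi\rangle$ for the vector $\xi=\sum e_i\otimes e_i$; this is the standard Choi-type correspondence. Evaluating the pairing of $\iota_n(X)$ against $F$ then produces the scalar $f(X)<0$ as one of its compressions. Hence $\iota_n(X)\notin C_n^{dd}$.

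The care needed in this step lies in matching the two descriptions of the dual cones, including the index shuffle and the vector $\xi$. Once that bookkeeping is done, $\iota_n^{-1}(C_n^{dd})\subseteq C_n$ follows, and $\iota$ is a complete order isomorphism. Finally, $\iota(e)=\hat e$ by definition, so $\iota$ is unital. Alternatively, one may take the unit of $(S^d)^d$ to be the functional "evaluate at $e$", which is faithful on $S^d$ by the result cited from \cite{4}. Either way, the isomorphism is unital.
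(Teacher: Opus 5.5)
The paper does not prove this statement; it quotes it from \cite{7}, so there is no in-paper argument to compare with. Your proof is correct, and it is the standard argument behind that citation.

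The forward inclusion $\iota_n(C_n)\subseteq C_n^{dd}$ follows directly from the completely-positive-map definition of the cones $C_m^d$, up to the canonical shuffle. The reverse inclusion is a levelwise bipolar theorem. You separate a Hermitian $X\notin C_n$ from the closed cone $C_n$ and turn the separating functional into some $F\in C_n^d$ via the Choi correspondence. Compressing $\phi^{(n)}_{\iota_n(X)}(F)$ by $\xi=\sum_i e_i\otimes e_i$ then returns $f(X)<0$.

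Compared with a bare citation, your route has two advantages. It identifies the isomorphism as the explicit evaluation map, which is the canonical embedding $J$ used in Remark \ref{20}. It also shows the map is unital once $(S^d)^d$ is given the unit $\hat e=\iota(e)$. This matters here, because the paper fixes units on duals only up to a choice of faithful state.

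A few small points of hygiene:
\begin{itemize}
\item In the forward step you say you use the second description of the dual cones, but what you actually verify, for all $m$ and all $F\in C_m^d$, is the first (CP) description. Since that is the definition, this is harmless.
\item Non-Hermitian $X$ can be set aside because $C_n^{dd}$ consists of Hermitian elements and $\iota$ is $*$-preserving. In the separation step, take the functional real on $M_n(S)_h$ and extend it complex-linearly, so that $f$ is Hermitian and $f(X)$ is real.
\item Your ``alternative'' unit, evaluation at $e$, is just $\hat e$ again. Its faithfulness on $(S^d)^+$ comes from the elementary bound $|f(s)|\le\|s\|\,f(e)$ for positive $f$ and Hermitian $s$. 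It does not come from the result cited from \cite{4}, which assumes faithfulness as a hypothesis.
\end{itemize}
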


\begin{thm}\label{5}

Given two operator systems $S_1$ and $S_2$, any linear map $\phi: S_1 \longrightarrow S_2$ is completely positive if and only if its adjoint map $\phi^{d}: S_2^{d} \longrightarrow S_1^{d}$ is completely positive.

\end{thm}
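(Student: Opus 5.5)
The plan is to deduce the statement directly from the definition of the dual matrix cones and the canonical identifications $M_n(S^d)\cong L(S,M_n)\cong (M_n(S))^d$ recorded above. Let $\phi: S_1\to S_2$ be linear. Its adjoint $\phi^d: S_2^d\to S_1^d$ is $\phi^d(f)=f\circ\phi$. At matrix level, $\phi^d_n: M_n(S_2^d)\to M_n(S_1^d)$ acts entrywise, so $F=(f_{ij})$ goes to $(f_{ij}\circ\phi)$. Under the identification $F\leftrightarrow \phi_F$ this reads
\[
\phi_{\phi^d_n(F)}=\phi_F\circ\phi .
\]
Everything reduces to this formula together with the description of $C_n^d$ as those $F$ for which $\phi_F$ is completely positive.

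\emph{Forward direction.} Assume $\phi$ is completely positive and $F\in C_n^d(S_2)$, so $\phi_F$ is completely positive. Then $\phi_F\circ\phi$ is a composition of completely positive maps, hence completely positive. By the formula, $\phi^d_n(F)\in C_n^d(S_1)$, so $\phi^d$ is completely positive. This direction needs no finite-dimensionality.

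\emph{Converse.} The cleanest route is to assume finite dimensionality, as the paper's setting implicitly does, and apply the forward direction to $\phi^d$. This shows $(\phi^d)^d: S_1^{dd}\to S_2^{dd}$ is completely positive. By Theorem \ref{4}, $S_i^{dd}\cong S_i$ via the canonical evaluation map $s\mapsto \hat s$, $\hat s(f)=f(s)$, and under these maps $(\phi^d)^d$ corresponds to $\phi$. So I need to check that the complete order isomorphism of Theorem \ref{4} is this canonical evaluation map. Once that is confirmed, $\phi$ is completely positive.

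For general (possibly infinite-dimensional) operator systems I would argue directly instead. Take $X=(x_{ij})\in M_m(S_1)^+$; I want $\phi_m(X)\in M_m(S_2)^+$. Using the description of $C_n^d$ as the linear functionals on $M_n(S)$ that are nonnegative on $C_n$, it suffices to show $g(\phi_m(X))\ge 0$ for every functional $g$ on $M_m(S_2)$ that is positive on the cone. This step relies on the cones of an operator system being closed and determined by positive functionals, which is a Hahn–Banach separation using the Archimedean property; I expect this to be the main technical point. Such a $g$ corresponds to some $G=(g_{ij})\in C_m^d(S_2)$. Complete positivity of $\phi^d$ gives that $\phi_{\phi^d_m(G)}=\phi_G\circ\phi$ is completely positive on $S_1$, so the associated functional on $M_m(S_1)$ is nonnegative at $X$. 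Unwinding the identifications, that value is exactly $g(\phi_m(X))$, which gives $\phi_m(X)\ge 0$.

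The remaining bookkeeping is to check that the identification of $M_m(S^d)$ with functionals on $M_m(S)$, namely $(f_{ij})\mapsto\big((x_{ij})\mapsto\sum_{i,j} f_{ij}(x_{ij})\big)$ up to the standard normalization, commutes with precomposition by $\phi$. This is immediate from linearity.
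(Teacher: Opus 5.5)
Your argument is correct. The paper gives no proof of this statement: it quotes it from \cite{7} together with Theorem~\ref{4}. So there is no internal argument to compare against, and your proposal supplies the standard one.

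The forward direction, via $\phi_{\phi^d_n(F)}=\phi_F\circ\phi$, is right and uses nothing about finite dimensionality.

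For the converse, keep your general route. Under $M_m(S^d)\cong (M_m(S))^d$, the map $\phi^d_m$ becomes $g\mapsto g\circ\phi_m$. So complete positivity of $\phi^d$ says exactly that $g\circ\phi_m\ge 0$ on $M_m(S_1)^+$ for every positive functional $g$ on $M_m(S_2)$. The Archimedean property of $I_m\otimes e$ (states determine the cone) then gives $\phi_m(X)\ge 0$.

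Make one point explicit. State separation is normally stated for self-adjoint elements, and you do not yet know that $Y=\phi_m(X)$ is self-adjoint. This is harmless: positive functionals on an order unit space are self-adjoint. So $g(Y)\ge 0$ for all states $g$ forces $g(Y-Y^*)=0$ for all states, and properness of the cone gives $Y=Y^*$.

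This route needs no finite-dimensionality hypothesis. That fits the theorem as stated for arbitrary operator systems. It also does not matter whether $S^d$ is the algebraic or the bounded dual, since positive functionals are automatically bounded.

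Your finite-dimensional double-dual route also works, but it adds a dependency without saving anything. It requires the isomorphism in Theorem~\ref{4} to be the evaluation map $s\mapsto\hat s$, which the paper's statement of that theorem does not record. Moreover, proving that the evaluation map is a complete order isomorphism needs the same separation fact. Specifically, $X\in M_n(S)$ is positive whenever $\psi_n(X)\ge 0$ for every completely positive $\psi:S\to M_k$, and this is the Hahn--Banach/Archimedean step you already use directly.
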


\subsection{Minimal and Maximal Operator Systems} 

For any ordered $\ast$-vector space $S$ with an Archimedean order unit, there are numerous ways to construct its matrix order above its ``ground level''. There is a resulting partial order on the matrix order structures given by reverse level-wise inclusion of the positive cones. To be precise, for two sequences of cones of an ordered $*$-vector space with an Archimedean order unit $C=\{C_n\}$ and $D=\{D_n\}$, if $C_n \subseteq D_n$, for all $n$, then we say that $C$ is \emph{stronger} than $D$, or $D$ is \emph{weaker} than $C$. 

Among these operator systems, there are two important operator system structures, the maximal and the minimal, denoted $\OMAX(S)$ and $\OMIN(S)$ respectively. These were first defined and investigated by Paulsen, Todorov, and Tomforde \cite{11}. The maximal operator system is the strongest order structure of an ordered $\ast$-vector space with an Archimedean order unit that can be equipped with, that is, it contains the level-wise smallest positive cones among all operator system structures which agree with the order structure on $S$. The minimal operator system is the weakest order structure of an ordered $*$-vector space with an Archimedean order unit, possessing the level-wise largest positive cones under the same constraint of agreeing with the order structure on $S$. 

The formal definition of minimal cones is as follows:

\begin{defn}\label{7}
Given an ordered $*$-vector space $(S, C, e)$ with positive cone $C$ and Archimedean order unit $e$, we define $C_k^{min}\subset M_k(S)$ by
\[
C_k^{min}:= \left\{(a_{ij}):\sum_{i,j=1}^{k}\overline{x_i}x_ja_{ij} \in C, \quad\forall x_i \in \C\right\}.
\]

The operator system $(S, \{C_k^{min}\}_{k\ge 1}, e)$ is called the \emph{minimal operator system structure}, $\OMIN(S)$, on $S$.
    
\end{defn}

The minimal operator system structure may be characterized as follows. 
Noting that $\mathbb{S}(V)$ is a compact Hausdorff space with respect to the weak-$*$ topology, the map 
\[
\phi : V \longrightarrow C(\mathbb{S}(V)), \qquad \phi (v)(f) := f(v)
\]
is a unital order isomorphism onto its range, where $C(\mathbb S(V))$ is given its usual operator system structure as a unital C$^*$-algebra. Then, $\phi$ is a complete isometry exactly with respect to the minimal order norm on $V$. See \cite{11}, Theorem 5.2, for a proof.

\medskip

The formal definition of the maximal operator system structure is as follows:

\begin{defn}\label{9}

Given an ordered $*$-vector space $(S,C,e)$ , denote the cone
\[
D_k^{max} := \left\{\sum_{i=1}^{m}a_i \otimes b_i: a_i\in M_k(\bb C)^+,\ b_i\in C,\ m\in \N \right\}.
\]

Then the maximal cone is defined as
\[
C_k^{max} := \left\{ X\in M_k(S)_{sa}: X + r(I_k\otimes e) \in D_k^{max},\, \forall r>0\right\}.
\]

The operator system $(S, \{C_k^{max}\}_{k\ge 1}, e)$ is called the \emph{maximal operator system}, denoted by $\OMAX(S)$.

\end{defn}

\subsection{Polynomial Matrix Spectral Factorization}

One version of the Fej\'er--Riesz theorem, called polynomial matrix spectral factorization, plays an important role in proving the main result of the paper. For other versions, we refer the reader to~\cite{8,9}. The polynomial matrix spectral factorization theorem provides a way to decompose a positive matrix on the real line whose entries are polynomials of complex coefficients. 

By \cite{1}, Theorem 2, we have:

\begin{thm}[Matrix Fej\'er--Riesz]
\label{14}
Let $P$ be an $m\times m$ matrix of polynomials in a real variable $t$ with degree at most $2n$, i.e., $P(t)=\sum_{k=0}^{2n}p_kt^k$, where $p_k \in M_m(\mathbb C)$. If $P(t)$ is positive semidefinite for all $t\in\mathbb R$, then there exists a $m\times m$ polynomial matrix $Q(t)=\sum_{k=0}^{n}q_kt^k$, such that $P(t)=Q(t)^*Q(t)$.

\end{thm}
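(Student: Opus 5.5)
Our plan is to prove the matrix Fejér--Riesz statement by induction on $m$, reducing to the scalar case and using a Schur-complement (LDL) factorization over the field of rational functions, followed by a degree-lowering step to clear denominators.

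\emph{Scalar case ($m=1$).} Let $p(t)\ge 0$ on $\mathbb R$ with $\deg p\le 2n$. The real and imaginary parts of $p$ are real polynomials, and the imaginary part vanishes on $\mathbb R$, so it is zero. Hence $p$ has real coefficients. Its real roots have even multiplicity and its nonreal roots come in conjugate pairs. Therefore $p(t)=c\prod_j (t-a_j)^{2}\prod_k (t-z_k)(t-\overline{z_k})$ with $c\ge 0$. Set $q(t)=\sqrt c\prod_j (t-a_j)\prod_k (t-z_k)$. Then $|q(t)|^2=p(t)$ for real $t$, and since both sides are polynomials they agree identically. Moreover $\deg q=\tfrac12\deg p\le n$.

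\emph{Inductive step.} We may assume $P\not\equiv0$.
\begin{enumerate}
\item Conjugating by a constant unitary, arrange $p_{11}\not\equiv 0$. If $\operatorname{rank}P<m$ generically, first split off the kernel: the kernel of $P(t)$ over $\mathbb C(t)$ is spanned by polynomial vectors, and one can complete these to a unimodular polynomial basis via Smith form. We expect this to be awkward, so we will instead prove the full-rank case and handle degenerate $P$ by applying it to $P+\varepsilon I$, then passing to a limit. The limit step works because the coefficients $q_k$ are bounded: $\sum_k\|q_k\|$ is controlled by $\|P\|$ on $[-1,1]$, using equivalence of norms on polynomials of degree $\le n$. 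This gives compactness, so the limit exists.
\item For $P$ positive definite at generic $t$, write the Schur decomposition
\[
P=\begin{pmatrix}1&0\\ v^*/p_{11}&I\end{pmatrix}\begin{pmatrix}p_{11}&0\\0&S\end{pmatrix}\begin{pmatrix}1&v/p_{11}\\0&I\end{pmatrix},\qquad S=P_{22}-\frac{v^*v}{p_{11}}.
\]
Then $p_{11}S$ is a positive semidefinite polynomial matrix of size $m-1$.
\item Apply the scalar case to $p_{11}$ and the induction hypothesis to $p_{11}S$. This yields a rational factorization $P=R^*R$ on $\mathbb R$, where $R$ has denominators dividing $q_{11}$.
\end{enumerate}

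\emph{Main obstacle: clearing denominators.} We must show $R$ can be replaced by a polynomial $Q$ with $\deg Q\le n$. The plan is the classical root-removal argument. Whenever $Q_0^*Q_0=f^*f\,P$ with $Q_0$ polynomial and $f(\zeta)=0$, we will remove one factor at a time.
\begin{itemize}
\item Write $Q_0=\tilde Q\,f$ for a candidate factor, and consider $\det Q_0$ at the root $\zeta$.
\item If $\zeta$ is a real root, positivity forces $Q_0(\zeta)=0$. Then $(t-\zeta)$ divides $Q_0$ and can be cancelled against $|t-\zeta|^2$ in $f^*f$.
\item If $\zeta$ is nonreal, $Q_0(\zeta)$ is singular. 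Choose a unit vector $u$ with $u^*Q_0(\zeta)=0$. Replace $Q_0$ by $\bigl(I-uu^*+\frac{t-\overline\zeta}{t-\zeta}uu^*\bigr)Q_0$. The middle factor is unitary for real $t$, so the product is still a factorization, and the new matrix is polynomial and divisible by one fewer factor $t-\zeta$ in the appropriate sense.
\end{itemize}
Iterating strips all of $f$. This is the step needing the most care: one must track that determinant degrees drop, so the process terminates. Finally, the bound $\deg Q\le n$ follows from comparing leading coefficients. Since $Q(t)^*Q(t)=P(t)$ has degree $\le 2n$ and the top coefficient $q_d^*q_d$ of $Q^*Q$ at degree $2d$ is nonzero whenever $q_d\ne0$, we get $d\le n$.
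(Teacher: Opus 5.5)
The paper gives no argument for this statement; it quotes it from \cite{1}, Theorem 2, so your proposal is an attempt at a self-contained proof. Most of it is sound. The scalar case is fine. The reduction to $P+\varepsilon I$ by compactness works, since for real $t$ one has $\|Q_\varepsilon(t)\|^2=\|P(t)+\varepsilon I\|$. The Schur-complement step and the final leading-coefficient bound $\deg Q\le n$ are also correct. There is one small slip: with $p_{11}=g^*g$ and $p_{11}S=T^*T$ you get $R=\begin{pmatrix} g & v/g^*\\ 0 & T/g\end{pmatrix}$. Its denominators involve $g^*$ as well as $g$, so you should take $f=p_{11}$. The genuine gap is in the denominator-clearing step, which is the heart of the proof.

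Evaluating $Q_0^*Q_0=f^*fP$ at a nonreal root $\zeta$ of $f$ gives only $Q_0(\bar\zeta)^*Q_0(\zeta)=0$, because $Q_0^*(\zeta)=Q_0(\bar\zeta)^*$. It does not force $Q_0(\zeta)$ to be singular; take e.g.\ $m=1$, $P=1$, $f=t-i$, $Q_0=t+i$. Your factor $B(t)=I-uu^*+\frac{t-\bar\zeta}{t-\zeta}uu^*$ has $\det B=\frac{t-\bar\zeta}{t-\zeta}$. Replacing $Q_0$ by $BQ_0$ therefore leaves $f$ and $\deg\det Q_0$ unchanged; it only moves one zero of $\det Q_0$ from $\zeta$ to $\bar\zeta$. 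So ``determinant degrees drop'' is false, and no step of your iteration ever removes a factor from $f$. For $Q_0=f=t-i$, your move produces $t+i$ and then stalls.

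The missing idea is to divide, using the orthogonality relation. Flip at $\zeta$ until $Q_0(\zeta)$ is invertible. This happens after finitely many flips, since each one lowers the order of vanishing of $\det Q_0\not\equiv 0$ at $\zeta$ by one; this is where $P+\varepsilon I$ is genuinely used. Then $Q_0(\bar\zeta)^*Q_0(\zeta)=0$ forces $Q_0(\bar\zeta)=0$, so $Q_0=(t-\bar\zeta)Q_1$. Writing $f=(t-\zeta)f_1$ and cancelling $(t-\zeta)(t-\bar\zeta)$ gives $Q_1^*Q_1=f_1^*f_1P$, and you induct on $\deg f$.

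Alternatively, you can do it in one stroke. Let $\Pi$ be the orthogonal projection onto $\mathrm{ran}\,Q_0(\zeta)$. The relation says $\Pi Q_0(\bar\zeta)=0$, so $\bigl((I-\Pi)+\frac{t-\zeta}{t-\bar\zeta}\Pi\bigr)Q_0$ is a polynomial matrix with the same $Q^*Q$. It vanishes at $\zeta$ and hence is divisible by $t-\zeta$. This version does not even need $\det Q_0\not\equiv0$.
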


\section{The Operator Systems of Finite Hankel Matrices and Polynomials}

In this section, we first introduce two operator systems: the operator system of finite Hankel matrices and the operator system of polynomials defined on the real line with complex coefficients and discuss their order structures.

\subsection{Operator Systems of Hankel Matrices} 

We denote the vector space of $(n+1)\times (n+1)$ Hankel matrices with complex entries as $\mathcal H_{n}$, that is, all matrices of the form $H = (h_{i+j})_{i,j=0}^{n}$. In schematic matrix form this is:
\[
H=
\begin{bmatrix}
h_0 & h_1 & \cdots & h_{n} \\
h_1&  h_2 & \cdots & h_{n+1} \\
\vdots & \vdots & \ddots & \vdots \\
h_{n} & h_{n+1} & \cdots & h_{2n}
\end{bmatrix}.
\]

Thus $\dim(\mathcal H_n)=2n+1$. We have that $\mathcal H_n$ inherits the usual matrix-ordered $\ast$-vector space structure from $M_{n+1}(\mathbb C)$. Note that the standard order unit, $I_{n+1}$, for $M_{n+1}(\mathbb C)$ does not belong to $\mathcal H_n$ for $n\geq 2$.
However, as the next lemma shows, any strictly positive finite Hankel matrix $e$ can be an Archimedean matrix order unit for the matrix-ordered $\ast$-vector space. As we will see below, we will construct a well-motivated and natural family of order units, one for each $\mathcal H_n$, which realize these as operator systems. Though this choice is not canonical, this will not affect the duality pairing for reasons we will detail later.

\begin{lem} \label{19}

Any strictly positive Hankel matrix is an Archimedean matrix order unit of the matrix-ordered $*$-vector space $\mathcal H_n$.

\end{lem}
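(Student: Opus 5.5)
We check the two defining properties directly for the matrix-ordered $*$-vector space $\mathcal H_n$, whose cones are $M_k(\mathcal H_n)^+ = M_k(\mathcal H_n)\cap M_k(M_{n+1}(\mathbb C))^+$, inherited from $M_{n+1}(\mathbb C)$. Let $e\in\mathcal H_n$ be strictly positive, i.e.\ positive definite. Then there is $\delta>0$ with $e\ge \delta I_{n+1}$; one may take $\delta$ to be the smallest eigenvalue of $e$. The plan is to transfer the known facts about $I_{n+1}$ in $M_{n+1}(\mathbb C)$ to $e$ using this comparison. Note that $\mathcal H_n$ is closed under the involution: the adjoint of a Hankel matrix $(h_{i+j})$ is $(\overline{h_{i+j}})$, again Hankel. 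So $M_k(\mathcal H_n)_{sa}$ consists of self-adjoint elements of $M_k(M_{n+1}(\mathbb C))$ whose blocks are Hankel.

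\emph{Matrix order unit.} Let $X\in M_k(\mathcal H_n)_{sa}$. Regarded as a self-adjoint element of $M_{k(n+1)}(\mathbb C)$, it satisfies $X\ge -\|X\| I_{k(n+1)}$. Since $I_k\otimes e\ge \delta I_{k(n+1)}$, taking $r=\|X\|/\delta$ gives
\[
X + r(I_k\otimes e)\ \ge\ -\|X\| I + \|X\| I = 0 .
\]
Moreover $X+r(I_k\otimes e)$ lies in $M_k(\mathcal H_n)$, so it lies in $M_k(\mathcal H_n)^+$. Hence $I_k\otimes e$ is an order unit at every level $k$.

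\emph{Archimedean property.} Suppose $X\in M_k(\mathcal H_n)_{sa}$ and $X+r(I_k\otimes e)\ge 0$ for every $r>0$. The positive semidefinite cone of $M_{k(n+1)}(\mathbb C)$ is closed. Letting $r\to 0$ therefore gives $X\ge 0$, and so $X\in M_k(\mathcal H_n)^+$. Alternatively, for each vector $v$ we have $\langle Xv,v\rangle \ge -r\langle (I_k\otimes e)v,v\rangle$ for all $r>0$, which forces $\langle Xv,v\rangle\ge 0$.

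There is no real obstacle here. The only point requiring care is that the cones on $\mathcal H_n$ are intersections with the ambient positive cones, so positivity can be tested in $M_{k(n+1)}(\mathbb C)$. The relevant perturbations $X+r(I_k\otimes e)$ stay inside $M_k(\mathcal H_n)$ precisely because $e\in\mathcal H_n$; this is why $e$ is used in place of $I_{n+1}$. The positive definiteness of $e$, through the bound $e\ge\delta I$, is what substitutes for having the identity available.
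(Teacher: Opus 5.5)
Your proof is correct and follows the same route as the paper. You use $e\ge\delta I$ to absorb $-X$ at each matrix level, and you make the constant $r=\|X\|/\delta$ explicit while noting that $X+r(I_k\otimes e)$ stays in $M_k(\mathcal H_n)$. The Archimedean property then follows, as in the paper, from closedness of the positive semidefinite cone.
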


\begin{proof}
If $\eta>0$, then there exists $c>0$ such that $$\eta\geq cI.$$
Hence, for every self-adjoint $X\in M_m(\mathcal H_n)$,
$$I_m\otimes \eta\geq cI,$$
so $$r(I_m\otimes \eta)+X\ge0$$
for r sufficiently large. This proves $I_m\otimes \eta$ is an order unit.

For the Archimedean property, if $$X+\epsilon(I_m\otimes \eta) \ge0 \ \ \ for \ every\  \epsilon>0,$$
then closedness of the positive cone gives $X\ge0$ as $\epsilon \downarrow 0.$

\end{proof}

\begin{example}\label{11}

Define the entries of a $(n+1)\times (n+1)$ Hankel matrix $H = (h_{i+j})_{i,j=0}^{n}$, 
$$h_k=\frac{1}{\sqrt{2\pi}}\int_{-\infty}^{+\infty}t^k\, e^{\frac{-t^2}{2}}dt$$ as the moment of standard normal distribution, i.e.
\[
h_k = 
\begin{cases}
(k-1)!!, & \text{if } k \hspace{0.15 cm} is \hspace{0.15 cm}even \\
0, & \text{if } k\hspace{0.15 cm}is \hspace{0.15 cm}odd
\end{cases}
\]
with $1!!=1$ by convention

Then, for every nonzero vector $x=(x_0,x_1, \dots, x_{n}) \in \mathbb{C}^{n+1}$, 
\begin{align*}
    \langle Hx,x \rangle= \sum_{i,j}h_{i+j}x_j \overline{x_{i}} &=\sum_{i,j}x_j \overline{x_{i}}\frac{1}{\sqrt{2\pi}}\int_{-\infty}^{+\infty}t^{i+j} e^{\frac{-t^2}{2}}dt \\
    &=\frac{1}{\sqrt{2\pi}}\int_{-\infty}^{+\infty}\left|\sum_{i=0}^nx_it^i\right|^2 e^{\frac{-t^2}{2}}dt >0.
\end{align*}
This proves directly that H is positive definite.

\end{example}

\subsection{Operator System of Polynomials}

We denote the $\ast$-vector space of polynomials of complex coefficients of a real variable with degree at most $2n$ by $\mathcal P_{2n}$, equipped with the involution $p^*(t)=\sum_{k=0}^{2n} \overline{a_k}t^k$, where $p(t)=\sum_{k=0}^{2n} a_kt^k$, $t \in \mathbb{R}$ and each $a_k \in \mathbb{C}$. The matrix-order structure is defined by the positive cones ${\{C_k'\}}$, where
\[
    C_k':= \left\{P(t)\in M_k(\mathcal P_{2n}): \langle P(t)x,x\rangle \ge 0,\,  \forall t \in \mathbb{R},\, \forall x \in \mathbb{ C}^{k}\right\}.
\]
Choosing 
\[e_n(t) :=\sum_{j=0}^{n}t^{2j},\]
we observe that $e_n(t)$ is an order unit of $\mathcal P_{2n}$. Let $$p(t)= \sum_{k=0}^{2n} a_k t^k \in M_k(\mathcal P_{2n})_{sa},\ \ \ C=\sum_{k=0}^{2n}\norm{a_k}.$$ 
Since $|t|^k\leq e_n(t)$ for all $0\leq k\leq 2n$, 
we have $$\norm{p(t)}\leq Ce_n(t)$$
for every $t\in \mathbb R$. Therefore
$$-Ce_n(t)I_k\leq p(t) \leq Ce_n(t)I_k.$$
This proves the matrix order-unit property immediately. 
Now let us verify that it is an Archimedean matrix order unit. Denote $$(e_n)_k:= I_k\otimes e_n.
$$

We claim that $(e_n)_k$ is an Archimedean order unit for $M_k(\mathcal P_{2n})$, for all $k\in \N$. Indeed, if $P + r e_k\in C_k'$ for all $r>0$, then $\ip{P(t)x}{x}\ge -r\|x\|^2$ for for all $r> 0$, hence $\ip{P(t)x}{x}\ge 0$ for all $t\in\bb R$ and $P\in C_k'$. Thus, we have  

\begin{prop}\label{prop:P2n-operator-system}
    For each $n\in\N$, $(\mathcal P_{2n},\{C'_k\}_{k\ge 1},e_n)$ is an operator system.
\end{prop}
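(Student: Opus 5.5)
The plan is to check the Choi--Effros axioms for an abstract operator system directly. They are: $\mathcal P_{2n}$ is a $\ast$-vector space; $\{C'_k\}$ is a matrix ordering; and $e_n$ is an Archimedean matrix order unit. The last item was essentially established in the discussion preceding the statement, so the work is to organize the remaining routine verifications and tidy up that argument.

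\emph{Involution and self-adjointness.} First, $p\mapsto p^*$ is a conjugate-linear involution on $\mathcal P_{2n}$, and it preserves degree at most $2n$. On $M_k(\mathcal P_{2n})$ the involution is $(p_{ij})^*=(p_{ji}^*)$. Since $t$ is real, $p^*(t)=\overline{p(t)}$, and hence $P^*(t)=P(t)^*$ pointwise for every $t\in\R$. It follows that each $C'_k$ lies in $M_k(\mathcal P_{2n})_{sa}$. Indeed, $P(t)\geq 0$ for all real $t$ gives $P(t)=P(t)^*=P^*(t)$ for all $t$. Two polynomials agreeing on $\R$ are equal, so $P=P^*$.

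\emph{Cone axioms.} Each $C'_k$ is a cone, being closed under sums and nonnegative scalars. It is proper: if $P$ and $-P$ both lie in $C'_k$, then $P(t)=0$ for all $t\in\R$, so $P=0$ as a polynomial matrix. For compatibility, take $\alpha\in M_{k,m}(\C)$ and $P\in C'_k$. Then $(\alpha^*P\alpha)(t)=\alpha^*P(t)\alpha\geq 0$ pointwise, so $\alpha^*P\alpha\in C'_m$. Thus $\{C'_k\}$ is a matrix ordering.

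\emph{Order unit.} The unit $e_n$ is self-adjoint and lies in $C'_1$. Given a self-adjoint $P\in M_k(\mathcal P_{2n})$, the estimate $\norm{P(t)}\leq Ce_n(t)$ yields $P+C(I_k\otimes e_n)\in C'_k$, which is the matrix order unit property. The one point needing care in the text's argument is the Archimedean property, since $e_n(t)$ is unbounded in $t$. Suppose $P+r(I_k\otimes e_n)\in C'_k$ for all $r>0$. Fix $t$ and $x$; then
\[
\ip{P(t)x}{x}\geq -r\,e_n(t)\norm{x}^2 \quad \text{for every } r>0.
\]
Letting $r\downarrow 0$ with $t$ fixed gives $\ip{P(t)x}{x}\geq 0$, so $P\in C'_k$.

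The only mild obstacle is this pointwise handling of the unbounded unit in the Archimedean step, together with the use of the identity theorem for polynomials to pass from pointwise statements on $\R$ to equalities in $\mathcal P_{2n}$. Having verified all the axioms, the Choi--Effros characterization shows that $(\mathcal P_{2n},\{C'_k\}_{k\ge1},e_n)$ is an operator system.
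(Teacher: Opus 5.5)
Your proof is correct and follows essentially the same route as the paper: the estimate $\norm{P(t)}\le C e_n(t)$ gives the matrix order unit property, and the Archimedean property is obtained pointwise by letting $r\downarrow 0$ with $t$ fixed. You also verify the matrix-ordering axioms that the paper leaves implicit ($C'_k\subseteq M_k(\mathcal P_{2n})_{sa}$, properness, and compatibility), and your bound $\ip{P(t)x}{x}\ge -r\,e_n(t)\norm{x}^2$ correctly states what the paper misstates as $-r\norm{x}^2$.
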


\begin{prop}\label{OMIN}

The operator system of polynomials defined above is the minimal operator system structure on $\mathcal P_{2n}$.

\end{prop}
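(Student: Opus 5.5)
The plan is to compare the cones $C_k'$ directly with the minimal cones $C_k^{min}$ of Definition~\ref{7}, computed from the ground-level cone $C_1'$. The point is that $C_k'$ is defined by pointwise positivity in $t$, while $C_k^{min}$ is defined by positivity of all scalar compressions. Both conditions reduce to the same statement about the scalar polynomials $\langle P(t)x,x\rangle$.

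First, I will record that the ground-level cone is the expected one: $C_1' = \{p \in \mathcal P_{2n} : p(t)\ge 0 \text{ for all } t\in\R\}$. So $(\mathcal P_{2n}, C_1', e_n)$ is the underlying ordered $*$-vector space, with the same Archimedean order unit. It therefore suffices to show $C_k' = C_k^{min}$ for every $k$.

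Next, let $P=(p_{ij}) \in M_k(\mathcal P_{2n})$ and $x=(x_1,\dots,x_k)\in\C^k$, and set
\[
q_x(t) := \sum_{i,j=1}^k \overline{x_i}x_j p_{ij}(t) = \langle P(t)x,x\rangle .
\]
This $q_x$ is a polynomial of degree at most $2n$, so $q_x \in \mathcal P_{2n}$. By Definition~\ref{7}, $P\in C_k^{min}$ if and only if $q_x \in C_1'$ for all $x$, that is, $q_x(t)\ge 0$ for all $t\in\R$ and all $x\in\C^k$. This is verbatim the defining condition of $C_k'$, so the two cones coincide.

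One point needs care. Membership in $M_k(\mathcal P_{2n})^+$ should presuppose self-adjointness, $p_{ji}=p_{ij}^*$. Positivity of $q_x(t)$ for all complex $x$ forces $P(t)$ to be Hermitian for each real $t$, by polarization. That gives $\overline{p_{ij}(t)}=p_{ji}(t)$ on $\R$, hence $p_{ji}=p_{ij}^*$ as polynomials, since two polynomials agreeing on $\R$ are equal. So both descriptions automatically consist of self-adjoint elements.

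There is no genuine obstacle; the self-adjointness bookkeeping just described is the only thing to watch. As an alternative confirmation, one could use the characterization via $C(\mathbb S(V))$: the point evaluations $\delta_t/e_n(t)$ are states, and the embedding $p\mapsto (t\mapsto p(t)/e_n(t))$ into $C(\overline{\R})$ would yield the same cones. However, the direct cone comparison above is cleaner and is the one I would write.
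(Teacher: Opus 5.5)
Your proposal is correct and takes essentially the same route as the paper: both show $C_k' = C_k^{min}$ by observing that membership in either cone amounts to the single condition $\langle P(t)x,x\rangle=\sum_{i,j}\overline{x_i}x_j p_{ij}(t)\ge 0$ for all $t\in\mathbb{R}$ and $x\in\mathbb{C}^k$. Your extra bookkeeping is a welcome refinement that the paper leaves implicit: you identify $C_1'$ as the ground-level cone, and you check that self-adjointness is automatic via polarization and the fact that polynomials agreeing on $\mathbb{R}$ are equal.
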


\begin{proof}

For any $Q_m(t)=(q_{ij}(t)) \in C'_m$, $Q_{m}(t)\ge0$, for all $t \in \R$ $\Leftrightarrow \sum_{i,j=1}^{m}\overline {x_i}x_j q_{ij}(t)\ge0, \forall x_1,\dots, x_m \in \C, \forall t\in \R$, which implies that $Q_m(t) \in C_m^{min}$. 

Conversely, suppose that $Q_m(t) \in C_m^{min}$, $\sum_{i,j=1}^{m}\overline {x_i}x_j q_{ij}(t)\ge 0$, $\forall x_1,\dots, x_m \in \C$, $\forall t \in \R$. For all $t \in \R$, $Q_m(t)\ge0$, so $Q_m(t)\in C'_m$. \qedhere

\end{proof}

\begin{remark} Verifying its minimality as an abstract operator system $(\mathcal P_{2n}, C_k',e)$, we can embed it into $C(\mathbb{S}(\mathcal P_{2n}))$ as a concrete operator system by using the Kadison representation theorem. The minimal structure of the operator system of polynomials will eventually lead to the commutativity of its $C^*$-envelope, which will be used in the sequel.
\end{remark}

\subsection{The Isomorphism Theorem}


In the following theorem, we use the notation above: $\mathcal H_n$ is the operator system of $(n+1)\times (n+1)$ Hankel matrices with the inherited matrix cones and chosen strictly positive Hankel order unit $\eta$, while $\mathcal P_{2n}$ is the operator system of polynomials of degree at most $2n$ with order unit $e_n=\sum_{j=0}^{n}t^{2j}$.

\begin{thm}\label{Iso} 

The operator system $\mathcal H_n$ is unitally completely order isomorphic to $\mathcal P_{2n}^d$. In addition, $(\mathcal H_n)^d\cong \mathcal P_{2n}$.

\end{thm}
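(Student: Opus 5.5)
The plan is to write down the natural duality pairing explicitly, check that it matches the matrix cones at every level, and then fix the order units. Define $\Phi:\mathcal H_n\to\mathcal P_{2n}^d$ by sending $H=(h_{i+j})$ to the functional
\[
\Phi(H)\Bigl(\sum_{k=0}^{2n}a_kt^k\Bigr)=\sum_{k=0}^{2n}a_kh_k .
\]
This is a linear bijection, since both spaces have dimension $2n+1$ and the functionals $t^k\mapsto\delta_{jk}$ are free. It is $*$-preserving: $H^*=H$ exactly when each $h_k\in\R$, which is exactly when $\Phi(H)(p^*)=\overline{\Phi(H)(p)}$.

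The main step is to show that $\Phi$ is a complete order isomorphism. Using the identification $M_m(\mathcal P_{2n}^d)\cong(M_m(\mathcal P_{2n}))^d$, I must show that a block Hankel matrix $H=(H_{ij})\in M_m(\mathcal H_n)$, with $H_{ij}=(h^{ij}_{k+l})$, is positive in $M_{m(n+1)}$ if and only if the functional
\[
F_H\bigl((p_{ij})\bigr)=\sum_{i,j}\Phi(H_{ij})(p_{ij})
\]
is nonnegative on $C'_m$.

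For sufficiency, take a vector $v=(v_{i,k})$ in $\C^{m(n+1)}$. Form the vector polynomial $x_i(t)=\sum_k v_{i,k}t^k$ and set $p_{ij}(t)=\overline{x_i(t)}x_j(t)$. This $P=x(t)^*x(t)$ (a rank-one factorization) lies in $C'_m$, and with the right index convention $F_H(P)=\langle Hv,v\rangle$. So positivity of $F_H$ on $C'_m$ forces $H\ge 0$.

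For necessity, assume $H\ge0$ and let $P\in C'_m$. By the matrix Fej\'er--Riesz theorem (Theorem~\ref{14}), $P=Q^*Q$ with $Q(t)=\sum_{k=0}^n q_kt^k$ an $m\times m$ matrix polynomial of degree at most $n$. Then
\[
P=\sum_{r=1}^m x_r(t)^*x_r(t),
\]
where $x_r(t)$ is the $r$th row of $Q$, a vector polynomial of degree at most $n$. By the computation above, $F_H(P)=\sum_r\langle Hv_r,v_r\rangle\ge0$. The one delicate point is keeping the conjugation and transpose conventions consistent between $\phi_F$ and the block pairing; I expect this bookkeeping, not the use of Fej\'er--Riesz, to be the main source of error.

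It remains to fix units. Since $\Phi$ is a complete order isomorphism, it carries the strictly positive $\eta$ to a faithful state, possibly after scaling. A functional $\phi=\Phi(\eta)$ is faithful exactly when $\phi(|x|^2)=\langle\eta v,v\rangle>0$ for every nonzero $p=|x|^2$, and every nonzero $p\ge0$ in $\mathcal P_{2n}$ is a sum of such terms by the $m=1$ case of Fej\'er--Riesz. By Proposition 2.17 of \cite{4}, this faithful state is an Archimedean order unit for $\mathcal P_{2n}^d$. Taking $\Phi(\eta)$ as the order unit of $\mathcal P_{2n}^d$ makes $\Phi$ unital; for instance, the Gaussian moments of Example~\ref{11} give the Gaussian-integral state.

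For the second statement, dualize. By Theorem~\ref{5}, $\Phi^d:\mathcal P_{2n}^{dd}\to\mathcal H_n^d$ is a complete order isomorphism, because both $\Phi$ and $\Phi^{-1}$ are completely positive. By Theorem~\ref{4}, $\mathcal P_{2n}^{dd}\cong\mathcal P_{2n}$, so $\mathcal H_n^d\cong\mathcal P_{2n}$. Under this identification $e_n$ corresponds to the functional $H\mapsto\sum_j h_{2j}=\Tr H$. This functional is a faithful positive functional on $\mathcal H_n$, so it serves as the order unit of $\mathcal H_n^d$ and the isomorphism is again unital.
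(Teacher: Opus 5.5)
Your proposal is correct and follows essentially the same route as the paper. The shared ingredients are the pairing $H\mapsto L_H$, Fej\'er--Riesz factorization for ``$H\ge 0\Rightarrow$ dual-positive'', the moment matrix $v(t)^*v(t)$ for the converse, the faithful functional $\Phi(\eta)$ as the order unit of $\mathcal P_{2n}^d$, and double duality for the second claim; for the converse you work in the functional picture of $(M_m(\mathcal P_{2n}))^d$ and test against compressions $x(t)^*x(t)$ of $v(t)^*v(t)$, while the paper applies $\psi_F^{(n+1)}$ to the single element $M(t)$. You also prove faithfulness of $\Phi(\eta)$ via the scalar Fej\'er--Riesz theorem and check unitality of $\mathcal H_n^d\cong\mathcal P_{2n}$ via $e_n\mapsto\Tr$, points the paper only asserts or defers to Remark~\ref{20}.
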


\begin{proof}
    
Consider a polynomial $p(t)=\sum_{k=0}^{2n} a_kt^k$. The linear map
\[
\phi: \mathcal H_n \longrightarrow (\mathcal P_{2n})^*,
\]
maps any Hankel matrix $H=(h_{i+j})_{i,j=0}^n$ to the dual of polynomials by
\[
\phi(H)=L_{H}, \qquad L_{H}(p)=\sum_{k=0}^{2n} a_kh_{k}.
\]

Denote the basis of $\mathcal H_n$ by $\{b_0, b_1, \cdots, b_{2n}\}$, where ${b_i}$ has entry $1$ on the $i$-th anti-diagonal and $0$ on the other entries. We observe that $\{L_{b_0}, \dotsc, L_{b_{2n}}\}$ is the dual basis to $\{1,\dotsc, t^{2n}\}$, that is,
\[
L_{b_i}(t^j) = 
\begin{cases}
1 & \text{if } i = j, \\
0 & \text{if } i \ne j.
\end{cases}
\]

We first show that $\phi$ is completely positive. Let
\[
H=\sum_{k=0}^{2n} N_k\otimes b_k\in M_m(\mathcal H_n)
\]
be positive. Equivalently, the block Hankel matrix
\[
(N_{i+j})_{i,j=0}^n\in M_{n+1}(M_m)
\]
is positive. We must show that
\[
\phi^{(m)}(H)=\sum_{k=0}^{2n}N_k\otimes L_{b_k}
\]
is a positive element of $M_m((\mathcal P_{2n})^d)$. By the definition of the matrix order on the dual, this means that the associated map
\[
\psi_H:\mathcal P_{2n}\longrightarrow M_m,\qquad \psi_H(t^k)=N_k,
\]
is completely positive.

Let $r\in \mathbb N$, and let
\[
P(t)=\sum_{k=0}^{2n}\,p_kt^k\in M_r(\mathcal P_{2n})
\]
be positive, i.e. $P(t)\geq 0$ for all $t\in\mathbb R$. By the polynomial matrix spectral factorization theorem, there exists a matrix polynomial
\[
Q(t)=\sum_{i=0}^{n}\,q_it^i,\qquad q_i\in M_r(\mathbb C),
\]
such that $P(t)=Q(t)^*Q(t)$. Therefore
\[
p_k=\sum_{i+j=k}q_i^*q_j.
\]
Consequently,
\[
\psi_H^{(r)}(P)=\sum_{k=0}^{2n}\,p_k\otimes N_k
 =\sum_{i,j=0}^{n}\,q_i^*q_j\otimes N_{i+j}.
\]
We claim that this last element is positive in $M_r\otimes M_m$. Indeed, if $\xi\in\mathbb C^r\otimes\mathbb C^m$ and we set
\[
\eta_i := (q_i\otimes I_m)\xi,
\]
then
\begin{align*}
\left\langle \left(\sum_{i,j=0}^{n}q_i^*q_j\otimes N_{i+j}\right)\xi,\xi\right\rangle
&=\sum_{i,j=0}^{n}\left\langle (I_r\otimes N_{i+j})\eta_j,\eta_i\right\rangle\geq 0,
\end{align*}
because $(N_{i+j})_{i,j=0}^n\geq0$. Thus $\psi_H$ is completely positive, and so $\phi$ is completely positive.

We now show that $\phi^{-1}$ is completely positive. Let
\[
F=\sum_{k=0}^{2n}\, N_k\otimes L_{b_k}\in M_m((\mathcal P_{2n})^d)
\]
be positive. This means precisely that the associated map $\psi_F$
is completely positive. Consider the matrix polynomial
\[
M(t) :=\big(t^{i+j}\big)_{i,j=0}^{n}=v(t)^*v(t),\qquad v(t)=(1,t,t^2,\ldots,t^n).
\]
Since $M(t)\geq0$ for all $t\in\mathbb R$, we have $M(t)\in M_{n+1}(\mathcal P_{2n})^+$. Applying the complete positivity of $\psi_F$, we obtain
\[
\psi_F^{(n+1)}(M)=\big(N_{i+j}\big)_{i,j=0}^{n}\geq 0.
\]
But this block Hankel matrix is exactly $\phi^{-1}(F)$ at the $m$-th matrix level. Hence $\phi^{-1}$ is completely positive.

It remains to check the unit. The Archimedean matrix order unit $\eta$ of $\mathcal H_n$ is the strictly positive Hankel matrix from Example \ref{11}. Under the above isomorphism, $\eta$ is sent to the positive functional
\[
\phi(\eta)(p)=\sum_{k=0}^{2n}a_k\eta_k,
\qquad p(t)=\sum_{k=0}^{2n}a_kt^k.
\]
Equivalently, in the Gaussian example, $\phi(\eta)$ is integration against the standard Gaussian measure on polynomials of degree at most $2n$. Since $\phi(\eta)$ is a faithful positive functional on $\mathcal P_{2n}$, it is an Archimedean matrix order unit for $(\mathcal P_{2n})^d$. We equip $(\mathcal P_{2n})^d$ with this order unit. With this convention, $\phi$ is unital.

By Theorem \ref{4}, we know that for any finite dimensional operator system $S$, $(S^d)^d \cong S$. Therefore
\[
(\mathcal H_n)^d \cong ((\mathcal P_{2n})^d)^d \cong \mathcal P_{2n}. \qedhere
\]

\end{proof}

\begin{cor}
    The operator system $\mathcal H_n$ is the maximal operator system structure on the $(n+1)\times (n+1)$ Hankel matrices.
\end{cor}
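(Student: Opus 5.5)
The plan is to combine Theorem \ref{Iso} with Proposition \ref{OMIN} and the standard duality between minimal and maximal structures, due to Paulsen, Todorov and Tomforde \cite{11}: for a finite-dimensional ordered $*$-vector space $V$ with Archimedean order unit, $\OMIN(V)^d = \OMAX(V^d)$ as matrix-ordered spaces. By Proposition \ref{OMIN}, $\mathcal P_{2n} = \OMIN(\mathcal P_{2n})$. Theorem \ref{Iso} gives a unital complete order isomorphism $\mathcal H_n \cong \mathcal P_{2n}^d$, where $\mathcal P_{2n}^d$ carries the faithful state $\phi(\eta)$ as its unit. Hence $\mathcal H_n \cong \OMIN(\mathcal P_{2n})^d \cong \OMAX(\mathcal P_{2n}^d)$. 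Because $\phi$ is a unital order isomorphism at the ground level, $\OMAX$ transports along it, which yields $\mathcal H_n = \OMAX(\mathcal H_n)$.

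If I avoid citing the duality result, I would prove the needed inclusion directly. The inclusion $C_k^{max} \subseteq M_k(\mathcal H_n)^+$ always holds. For the reverse, take $H \in M_k(\mathcal H_n)^+$ and suppose $H \notin C_k^{max}$. Then some element $H+r(I_k\otimes\eta)$ lies outside the closed cone $D_k^{max}$. I would take closedness of $D_k^{max}$ in finite dimensions as given; otherwise I would work with its closure, using the $r>0$ slack. By Hahn--Banach there is a functional $f$ on $M_k(\mathcal H_n)$ that is nonnegative on $D_k^{max}$ and negative at that point.

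A functional nonnegative on $D_k^{max}$ corresponds to a map $\mathcal H_n \to M_k$ that is positive at the ground level. Composing with $\phi^{-1}$, it gives a positive map $\mathcal P_{2n}^d \to M_k$. Dualizing via Theorems \ref{4} and \ref{5}, this becomes a map whose domain or codomain involves $\OMIN(\mathcal P_{2n})$. Positive maps into a minimal system are automatically completely positive; equivalently, positive maps out of a maximal system are completely positive. It follows that $f$ is nonnegative on all of $M_k(\mathcal H_n)^+$, which contradicts $H + r(I_k\otimes\eta) \ge 0$.

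The main obstacle is the bookkeeping in this dualization. One must check that ground-level positivity of the functional on $M_k(\mathcal H_n)$ translates into ground-level positivity of a map into $\OMIN(\mathcal P_{2n})$, or into a commutative $C^*$-algebra via the embedding $\mathcal P_{2n}\hookrightarrow C(\mathbb S(\mathcal P_{2n}))$. Once it does, positivity of maps into commutative $C^*$-algebras upgrades to complete positivity. I would first try the cleanest route, citing $\OMIN(V)^d=\OMAX(V^d)$ from \cite{11}. I would also note that the choice of order unit is irrelevant, since $\OMAX$ depends only on the ground cone and any two Archimedean units yield the same matrix cones.
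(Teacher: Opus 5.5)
Your main route is exactly the paper's: $\mathcal P_{2n}=\OMIN(\mathcal P_{2n})$ by Proposition \ref{OMIN}, finite-dimensional OMIN/OMAX duality then makes $\mathcal P_{2n}^d$ maximal, and Theorem \ref{Iso} transports this to $\mathcal H_n$. Your remark that the choice of Archimedean order unit does not affect $\OMAX$ in finite dimensions is a correct detail that the paper leaves implicit, and your fallback Hahn--Banach argument is also sound (it is essentially the paper's later Lemma \ref{43}/Theorem \ref{44} reasoning that positive maps out of $\mathcal H_n$ are completely positive), though it is not needed.
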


\begin{proof}
Since $$\mathcal P_{2n}=\OMIN(\mathcal P_{2n}),$$
finite-dimensional OMIN/OMAX duality implies that $\mathcal P_{2n}^d$ is maximal. Theorem \ref{Iso} then yields that $\mathcal H_n$ is maximal.

\end{proof}

\begin{remark}\label{20}
While there are many choices of order unit for $\mathcal H_n$, the order unit $e_n$ being the natural order unit for $\mathcal P_{2n}$ is well justified by Theorem \ref{Iso}.
Indeed, under the canonical embedding $J: \mathcal P_{2n} \longrightarrow (\mathcal P_{2n})^{**}$, 
\[J(e_n)(L_H)=L_H(e_n)=\sum_{k=0}^{n}h_{2k} = \Tr(H).\]
Thus, $e_n$ establishes the natural trace-pairing duality between $e_n$ and the Hankel matrices, together with the corresponding transference of positive cones.

\end{remark}

\begin{remark}

The above construction of duality for the Hankel matrices can be framed in a general context. For any complex $*$-vector subspace $V \subseteq M_n$, let $P_V: M_n\longrightarrow V$ denote the trace-orthogonal projection. Suppose that $P_V(x)\ge 0$ for all $0\not= x\in M_n^+$ and that $e_V := P_V(I_n)$ is strictly positive, $\widetilde{C_1} :=P_V((M_n)^+)$ and 
\[\widetilde{C_k} := \{(I_k\otimes P_V)(A): A\in (M_k\otimes M_n)^+ \},\]
we observe that we have defined a matrix order for $V$, provided these cones are closed. This is guaranteed, for instance, by $V^\perp\cap M_n^+ = \{0\}$. Defining $C_k$ as the cone of positive semidefinite matrices in $M_k(V)\subseteq M_k(M_n)$, we see that $(V, C_k)$ and $(V, \widetilde{C_k})$ are matrix-ordered $*$-complex vector spaces. 

\begin{thm}\label{30}

$(V, C_k)$ is completely order isomorphic to $(V,\widetilde{C_k},e_V)^d$, whenever $\widetilde{C_k}$ is closed.

\end{thm}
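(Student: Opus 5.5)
We will model the argument on the proof of Theorem \ref{Iso}, replacing the polynomial factorization step by the definition of $\widetilde{C_k}$ as a projection of positive cones. The duality is the trace pairing restricted to $V$: define $\phi:V\to V^d$ by $\phi(H)(x)=\Tr(Hx)$ (or $\Tr(H^{T}x)$, whichever convention makes the pairing match the matrix order in the dual; we fix $\phi(H)(x)=\Tr(xH)$ and transpose at the matrix level if needed). Since $V$ is a $*$-subspace, the trace form is nondegenerate on $V$. Indeed, $\Tr(H H^*)=0$ forces $H=0$, so $\phi$ is a linear isomorphism, and it is $*$-preserving. At level $m$, an element $H=\sum_k N_k\otimes v_k\in M_m(V)$, with $\{v_k\}$ a basis of $V$, is sent to the map $\psi_H:V\to M_m$ given by $\psi_H(x)=(\Tr(x H_{ij}))_{ij}$, where $H=(H_{ij})$ with $H_{ij}\in V$. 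We must show that $H\in C_m$ if and only if $\psi_H$ is completely positive on $(V,\widetilde{C_k})$.

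The key identity is $\Tr(P_V(A)H)=\Tr(AH)$ for $A\in M_n$ and $H\in V$, which holds because $P_V$ is the trace-orthogonal projection. Suppose $H\in C_m$, so $H\ge0$ in $M_m(M_n)$, and let $X=(I_k\otimes P_V)(A)\in\widetilde{C_k}$ with $A\in(M_k\otimes M_n)^+$. Then
\[
\psi_H^{(k)}(X)=\big(\Tr(X_{pq}H_{ij})\big)=\big(\Tr(A_{pq}H_{ij})\big)=\psi_H^{(k)}(A),
\]
where on the right $\psi_H$ is extended to all of $M_n$ by the same formula. On $M_n$ this extended map is, up to a transpose, the map whose Choi matrix is $H$, and its complete positivity is equivalent to $H\ge0$. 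We will verify this in coordinates: for a vector $\xi=\sum\xi_{pi}\,e_p\otimes e_i$, the quantity $\sum\overline{\xi_{pi}}\xi_{qj}\Tr(A_{pq}H_{ij})$ is the trace of a product of two positive operators, $A$ and a partial transpose/reshuffle of $H$. We will choose the pairing convention so that it is literally $\Tr\big((A\otimes\cdot)\,\cdot\big)\ge0$. This gives $\phi(C_m)\subseteq C_m^d$.

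For the converse, assume $\psi_H$ is completely positive on $(V,\widetilde{C})$. We test it against $X=(I_n\otimes P_V)(E)$, where $E=(e_{pq})_{p,q}\in M_n(M_n)^+$ is the Choi matrix of the identity. By the identity above, $\psi_H^{(n)}(X)=(\Tr(e_{pq}H_{ij}))=(H_{ij})_{qp}$, which is $H$ up to a canonical shuffle or transpose, and it is positive. Hence $H\ge0$. This mirrors the use of $M(t)=v(t)^*v(t)$ in Theorem \ref{Iso}.

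It remains to handle the operator system structure. The closedness hypothesis on $\widetilde{C_k}$, together with the hypothesis that $e_V$ is strictly positive, makes $(V,\widetilde{C_k},e_V)$ an operator system. Here $e_V$ is an order unit because $(I_k\otimes P_V)(I)=I_k\otimes e_V$ and $(I_k\otimes P_V)$ maps $M_k(M_n)$ onto $M_k(V)$, and the Archimedean property follows from closedness. Its dual is therefore well defined by Theorem \ref{4}. The statement asks only for a complete order isomorphism, so no unit matching is required.

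The main obstacle I expect is the bookkeeping of transposes in the Choi correspondence. The trace pairing $\Tr(xH)$ versus $\Tr(x^TH)$, and the ordering of tensor factors, must be chosen so that positivity of $H$ corresponds exactly to complete positivity, rather than to complete copositivity. I will first fix the convention so that $\psi^{(n)}(E)$ reproduces $H$ exactly, and then check that the forward direction computation becomes $\Tr(A\cdot H')$ with $H'$ a unitary conjugate of $H$.
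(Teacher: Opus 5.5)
The transposition issue you defer to the end is not bookkeeping; it is where the argument breaks. With the pairing you fix, $\phi(H)(x)=\Tr(xH)$, you have $\Tr(e_{pq}H_{ij})=(H_{ij})_{qp}$. So the Choi matrix of the extended map $\Psi_H(x)=(\Tr(xH_{ij}))_{ij}$ on $M_n$ is, up to the tensor flip, the partial transpose $(\mathrm{id}_m\otimes T)(H)=(H_{ij}^T)_{i,j}$, not $H$. A partial transpose of a positive matrix need not be positive, so your forward computation (``a partial transpose/reshuffle of $H$'') does not produce a nonnegative number. Concretely, take $V=M_n$, so that $P_V=\mathrm{id}$ and $\widetilde{C_k}=C_k$, and take $H=\sum_{i,j}e_{ij}\otimes e_{ij}\ge 0$. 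Then $\psi_H(x)=x^T$, which is not completely positive, so $\phi(C_m)\subseteq C_m^d$ fails. Your converse test against $(I_n\otimes P_V)(E)$ likewise only yields $(\mathrm{id}_m\otimes T)(H)\ge 0$. With this pairing, your two steps actually prove $(V,\widetilde{C_k})^d\cong(V^T,C_k)$, which is a copy of the opposite operator system of $(V,C_k)$, not $(V,C_k)$ itself.

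Switching to $\phi(H)(x)=\Tr(x^TH)$, the paper's choice $\phi_v(w)=\Tr(w^tv)$, repairs the Choi step: $\psi_H^{(n)}((I_n\otimes P_V)(E))$ is then the flip of $H$. But the identity you need becomes $\Tr(P_V(A)^TH)=\Tr(A^TH)$, i.e.\ $A-P_V(A)\perp\overline{H}$. Trace-orthogonality gives this only if $\overline{H}\in V$ for all $H\in V$, i.e.\ $V^T=V$. Likewise, ``transposing at the matrix level'' sends $M_m(V)$ into $M_m(V^T)$, so it is only available when $V$ is transpose-closed. For a general $*$-subspace this is a real restriction. For $V=\mathrm{span}\{I_2,a\}$ with $a=e_{12}+ie_{21}$ (note $a^*=-ia$), one has $\Tr(w^Ta)=0$ for every $w\in V$, so the transposed pairing is even degenerate on $V$. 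The missing ingredient is therefore the hypothesis $V=V^T$. It holds for Hankel matrices, which are symmetric, so there both pairings coincide. It is also implicit in the paper's own proof.

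Granting $V=V^T$, your argument is correct and is more explicit than the paper's. The paper maps $(V,\widetilde{C_k},e_V)$ into $(V,C_k)^d$ and simply asserts the cone duality; the nontrivial half of that assertion is a bipolar argument that uses closedness of $\widetilde{C_k}$, and the stated form then follows only after dualizing. You obtain the stated direction directly, checking both inclusions by Choi-matrix computations.
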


\begin{proof}

Define $\phi:(V,\widetilde{C_k},e_V) \longrightarrow (V, C_k,E_V)^d$, where $\phi(v)=\phi_v$, for all $v\in (V,\widetilde{C_k},e_V)$, and 
\[
\phi_v(w)=\Tr(w^tv),
\]
for any $w\in V$. Since $P_V$ is the trace-orthogonal projection, the cone $\widetilde C_k=(I_k\otimes P_V)(M_k(M_n)^+)$ is dual, under the trace pairing, to the cone $C_k=M_k(V)\cap M_k(M_n)^+$. Hence, for each matrix level, $\phi^{(k)}$ identifies the positive cone of $(V,\widetilde C_k,e_V)$ with the dual positive cone of $(V,C_k,E_V)$. Thus $\phi$ and $\phi^{-1}$ are completely positive. It is also unital after equipping the dual with the order unit $\phi_{e_V}$, which is faithful. \qedhere

\end{proof}

\end{remark}

\begin{remark}\label{46}

We can see that by Theorem \ref{Iso}, $(V, \widetilde{C_k}, e_V) \cong (V, C_k,E_V)^d \cong \mathcal P_{2n}$, when V is the space of Hankel matrices. What makes this particular instance of the last theorem special is that we have a concrete, legible model for $(V, \widetilde{C_k}, e_V)$ as $\mathcal P_{2n}$.
\end{remark}

We record the following lemmas, which are fairly standard, for convenience.

\begin{lem}\label{17}
Let $S_1$ and $S_2$ be operator systems. If $C_e^*(S_2)$ is abelian, then every positive linear map $\phi:S_1\to S_2$ is completely positive.
\end{lem}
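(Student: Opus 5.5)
The plan is to reduce the statement to the classical fact that positive maps into commutative $C^*$-algebras are automatically completely positive. Let $\iota: S_2 \to C_e^*(S_2)$ denote the canonical unital complete order embedding of $S_2$ into its $C^*$-envelope. By hypothesis $C_e^*(S_2)$ is abelian, so by Gelfand theory $C_e^*(S_2)\cong C(X)$ for some compact Hausdorff space $X$. Given a positive map $\phi: S_1\to S_2$, the composition $\iota\circ\phi: S_1\to C(X)$ is positive, since $\iota$ is in particular an order embedding at the first matrix level.

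The key step is to show that $\iota\circ\phi$ is completely positive. Let $(s_{ij})\in M_k(S_1)^+$. Its image $(\iota\phi(s_{ij}))\in M_k(C(X))\cong C(X,M_k)$ is positive exactly when, for every $x\in X$ and every $\lambda\in\C^k$,
\[
\sum_{i,j}\overline{\lambda_i}\lambda_j\,\iota\phi(s_{ij})(x)\ge 0 .
\]
Now $\sum_{i,j}\overline{\lambda_i}\lambda_j s_{ij}=\lambda^*(s_{ij})\lambda$ is a compression of a positive matrix, so it lies in $S_1^+$. Linearity and positivity of $\phi$ give $\sum_{i,j}\overline{\lambda_i}\lambda_j\phi(s_{ij})\in S_2^+$, and therefore its image under $\iota$ is a nonnegative function on $X$. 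Evaluating at $x$ yields the required inequality. Equivalently, we could use that point evaluations are states and that a positive map into $C(X)$ is positive into $\OMIN$, which coincides with the structure of $C(X)$.

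Finally, we transfer back to $S_2$. Since $\iota$ is a complete order isomorphism onto its range, a matrix $(y_{ij})\in M_k(S_2)$ is positive if and only if $(\iota(y_{ij}))$ is positive in $M_k(C_e^*(S_2))$. Hence $\phi^{(k)}((s_{ij}))\in M_k(S_2)^+$ for every $k$, so $\phi$ is completely positive.

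The argument involves no real obstacle. The one point requiring care is that positivity in $M_k(S_2)$ is detected inside the envelope; this holds because the embedding into $C_e^*(S_2)$ is a complete order embedding by construction, and not merely a unital injection.
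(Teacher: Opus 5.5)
Your proof is correct and follows the same route as the paper: compose $\phi$ with the complete order embedding $S_2\hookrightarrow C_e^*(S_2)$, use that positive maps into an abelian $C^*$-algebra are completely positive, and pull positivity back through the embedding. The only difference is that you prove the automatic complete positivity into $C(X)$ explicitly, via vector compressions and point evaluations, whereas the paper simply invokes it as a known fact.
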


\begin{proof}
The inclusion of $S_2$ into its $C^*$-envelope is a complete order embedding. Composing $\phi$ with this inclusion gives a positive map from $S_1$ into an abelian $C^*$-algebra, hence a completely positive map. Therefore $\phi$ itself is completely positive. \qedhere
\end{proof}

\begin{lem}\label{43}

Given $S$ a finite-dimensional operator system such that the $C^*$-envelope of its dual space is commutative, then any positive linear map $\varphi: S\longrightarrow B(H)$, for some Hilbert space $H$, is completely positive.

\end{lem}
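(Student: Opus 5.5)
The plan is to reduce the statement to Lemma \ref{17} through duality. The key tool is the fact that $C_e^*(S^d)$ commutative means $S^d$ sits completely order isomorphically inside an abelian C$^*$-algebra. That forces $S^d=\OMIN(S^d)$, and so, by finite-dimensional OMIN/OMAX duality (as used in the Corollary above), $S\cong (S^d)^d$ is $\OMAX(S)$ (Theorem \ref{4}).

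\emph{Step 1: $S^d$ is minimal.} Let $\iota: S^d\to C_e^*(S^d)\cong C(X)$ be the unital complete order embedding. An element $F\in M_k(S^d)$ with $F\in C_k^{min}$ satisfies $\sum \overline{x_i}x_j f_{ij}\ge 0$ for all scalars $x_i$. Hence $\iota^{(k)}(F)(x)$ is positive semidefinite for every point $x\in X$, so $\iota^{(k)}(F)\ge 0$ in $M_k(C(X))$. Therefore $F\in M_k(S^d)^+$. The reverse inclusion $M_k(S^d)^+\subseteq C_k^{min}$ always holds.

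\emph{Step 2: $S$ is maximal.} Dualizing, a positive map $\psi: S^d\to \OMIN(S^d)$ with its identity realization is completely positive. Taking adjoints (Theorem \ref{5}) and using $(S^d)^d\cong S$ together with $\OMIN(S^d)^d=\OMAX(S)$ identifies $S$ with $\OMAX(S)$. The fact $\OMIN(V)^d\cong \OMAX(V^d)$ for finite-dimensional $V$ is the standard duality of Paulsen--Todorov--Tomforde that the paper already invoked in the Corollary, so I will quote it.

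\emph{Step 3: positivity implies complete positivity.} Now let $\varphi:S\to B(H)$ be positive. Since $S=\OMAX(S)$, the universal property of OMAX says every positive map out of $\OMAX(S)$ into any operator system is completely positive. Concretely, take $X\in C_k^{max}$. Then $X+r(I_k\otimes e)=\sum_i a_i\otimes b_i$ with $a_i\ge0$ and $b_i\ge 0$, so $\varphi^{(k)}(X)+r(I_k\otimes\varphi(e))=\sum_i a_i\otimes\varphi(b_i)\ge 0$. Letting $r\downarrow 0$ and using that the cone of $M_k(B(H))$ is closed gives $\varphi^{(k)}(X)\ge0$.

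The main obstacle is Step 2: the identification $\OMIN(S^d)^d=\OMAX(S)$ with its compatible matrix orders, including the choice of order unit on the dual. I would rely on the cited duality theorem rather than reprove it. An alternative route to Step 2 dualizes $\varphi$ directly: the adjoint $\varphi^d$ restricted to a finite-dimensional piece lands in $S^d$, and by Lemma \ref{17} positive maps into $S^d$ are completely positive, so Theorem \ref{5} transfers this back. However, $\varphi^d$ is defined on $B(H)^d$, which is not finite-dimensional. For that reason I prefer the OMAX argument.
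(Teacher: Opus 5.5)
Your argument is correct, but it takes a different route from the paper's. The paper never goes through $\OMIN/\OMAX$. It replaces the codomain $B(H)$ by the finite-dimensional operator subsystem $V=\mathrm{span}(\varphi(S)\cup\{I\})$, so that $\varphi^d:V^d\to S^d$ is a positive map between finite-dimensional operator systems. Lemma~\ref{17}, with $C^*_e(S^d)$ abelian, makes $\varphi^d$ completely positive, and Theorem~\ref{5} together with $(\varphi^d)^d=\varphi$ transfers this back to $\varphi$. In other words, the ``alternative route'' you set aside in your last paragraph is exactly the paper's proof. The infinite-dimensionality of $B(H)^d$ that worried you is avoided by shrinking the codomain rather than restricting the adjoint.

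The paper's version is shorter and uses only Lemma~\ref{17} and Theorems~\ref{4}--\ref{5}. Yours yields the stronger structural fact $S=\OMAX(S)$, so positive maps into any operator system are completely positive, and the lemma becomes the same phenomenon as the Corollary on maximality of $\mathcal H_n$. The price is quoting the duality $\OMIN(V)^d\cong\OMAX(V^d)$.

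You can in fact drop that citation, and with it the muddled first sentence of your Step 2. Once Step 1 gives $C_k^{min}=M_k(S^d)^+$, take a positive $\psi:S\to M_k$ and let $F=(f_{ij})\in M_k(S^d)$ be the element with $\phi_F=\psi$. Then $\sum_{i,j}\overline{x_i}x_jf_{ij}=\langle\psi(\cdot)x,x\rangle\ge 0$ on $S^+$, i.e.\ $F\in C_k^{min}$, which says precisely that $\psi$ is completely positive. Compressing $\varphi$ to finite-dimensional subspaces of $H$ then finishes the proof directly.
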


\begin{proof}
We notice that the image of S under $\varphi$ is a $\ast$-closed subspace of $B(H)$. Then the subspace $\varphi(S)$ and the unit of $B(H)$ span a finite dimensional operator system, denoted by $V$ and $\varphi: S \longrightarrow V$ is a positive linear map. Its dual map, $\varphi^d: V^d\longrightarrow S^d$, is also a positive linear map. By Lemma \ref{17}, $C^*_e(S^d)$ is commutative, then the dual map is completely positive, then $\varphi$ is a completely positive linear map, by $(\varphi^d)^d=\varphi$ and Theorem \ref{5}. \qedhere

\end{proof}

\begin{thm}\label{44}
Every positive linear map $$\varphi: \mathcal{H}_n\longrightarrow T$$
into an operator system T is completely positive.
\end{thm}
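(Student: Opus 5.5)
The plan is to exploit the fact that $\mathcal H_n$ carries the maximal operator system structure, established in the Corollary following Theorem \ref{Iso}. This makes the statement essentially the universal property of $\OMAX$: a positive map out of $\OMAX(V)$ into any operator system is completely positive. I would prove this directly from Definition \ref{9}, so the argument does not depend on citing \cite{11}. As a cross-check I would also sketch the dual route through Lemma \ref{43}.

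The direct argument goes as follows. Let $\varphi:\mathcal H_n\to T$ be positive and fix $k$ and $X\in M_k(\mathcal H_n)^+$. Since $\mathcal H_n=\OMAX(\mathcal H_n)$, we have $X\in C_k^{max}$. So for every $r>0$ we can write
\[
X+r(I_k\otimes\eta)=\sum_{i=1}^m a_i\otimes b_i, \qquad a_i\in M_k^+,\ b_i\in\mathcal H_n^+ .
\]
Applying $\varphi^{(k)}$ gives
\[
\varphi^{(k)}(X)+r\,(I_k\otimes\varphi(\eta))=\sum_i a_i\otimes\varphi(b_i).
\]
Each summand $a_i\otimes\varphi(b_i)$ is positive in $M_k(T)$, because $\varphi(b_i)\in T^+$ and the tensor of a positive scalar matrix with a positive element lies in $M_k(T)^+$ by compatibility of the matrix order. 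Hence the right-hand side lies in $M_k(T)^+$ for every $r>0$.

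Next, $\varphi(\eta)$ is positive, so it is dominated by $\lambda e_T$ with $\lambda=\|\varphi(\eta)\|$. It follows that $\varphi^{(k)}(X)+r\lambda(I_k\otimes e_T)\ge 0$ for all $r>0$. The Archimedean property of $e_T$ at level $k$ then gives $\varphi^{(k)}(X)\ge 0$. Therefore $\varphi$ is completely positive. If $\lambda=0$, then $\varphi(\eta)=0$ and the inequality holds directly.

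The main obstacle is making sure the Corollary's maximality statement is genuinely valid. That statement rests on Proposition \ref{OMIN}, on Theorem \ref{Iso}, and on the finite-dimensional OMIN/OMAX duality $\OMIN(S)^d=\OMAX(S^d)$. If I wanted to avoid that duality, I would instead argue through Lemma \ref{43}. For that route:
\begin{enumerate}
\item By Theorem \ref{Iso}, $(\mathcal H_n)^d\cong\mathcal P_{2n}$.
\item By Proposition \ref{OMIN}, $\mathcal P_{2n}$ is $\OMIN$, so it embeds completely order isomorphically into $C(\mathbb S(\mathcal P_{2n}))$. Its $C^*$-envelope is therefore a quotient of a commutative $C^*$-algebra, hence commutative.
\item Lemma \ref{43} then gives the result for targets $T\subseteq B(H)$, and Choi--Effros realizes any operator system $T$ concretely in this way.
\end{enumerate}
The one subtle point in this alternative is that Lemma \ref{43} only passes from $\varphi$ to the finite-dimensional $V=\operatorname{span}(\varphi(\mathcal H_n),1)$. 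Positivity of $\varphi$ into $V$ with the order inherited from $T$, and complete positivity being detected there, both follow because $V\subseteq T$ carries the restricted matrix order.
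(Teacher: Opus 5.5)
Your proof is correct, but your primary argument is not the paper's route; your fallback is.

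The paper never uses the maximality Corollary here. It works entirely on the dual side. It notes $(\mathcal H_n)^d\cong\mathcal P_{2n}$ and that $\mathcal P_{2n}=\OMIN(\mathcal P_{2n})$ has an abelian $C^*$-envelope, so positive maps into it are completely positive (Lemma~\ref{17}). Lemma~\ref{43} then transports this through $\varphi\mapsto\varphi^d$ to positive maps out of $\mathcal H_n$.

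Your main route instead takes $\mathcal H_n=\OMAX(\mathcal H_n)$ from the Corollary and proves the universal property of $\OMAX$ by hand from Definition~\ref{9}. That computation is sound: you decompose $X+r(I_k\otimes\eta)$, apply $\varphi^{(k)}$, dominate $\varphi(\eta)$ by $\|\varphi(\eta)\|e_T$, and use the Archimedean property of $e_T$.

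\textbf{What your route buys.} It is elementary, avoids $C^*$-envelopes, and handles an arbitrary target operator system $T$ directly. The paper, by contrast, applies Lemma~\ref{43}, which is stated only for targets in $B(H)$, to a general $T$. That tacitly needs either a Choi--Effros representation of $T$ or the observation that the proof of Lemma~\ref{43} never uses $B(H)$; your fallback makes this step explicit. Your route also shows that Theorem~\ref{44} is just a restatement of the Corollary, since the universal property characterizes $\OMAX$.

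\textbf{What it costs.} It depends on the finite-dimensional duality $\OMIN(S)^d=\OMAX(S^d)$, which the paper cites without proof in the Corollary. The paper's route replaces that citation with the $C^*$-envelope argument of Lemmas~\ref{17} and~\ref{43}. That argument in effect proves the needed half of the duality for this particular $S$: a dual with commutative envelope forces maximality.

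Both arguments ultimately rest on the same two inputs, Theorem~\ref{Iso} and Proposition~\ref{OMIN}.
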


\begin{proof}

Since $\varphi:\mathcal H_n\to T$ is positive, $(\mathcal H_n)^d\cong \mathcal P_{2n}$ and the $C^*$-envelope of $\mathcal P_{2n}$ is abelian, Lemma \ref{43} applies and implies that $\varphi$ is completely positive. \qedhere

\end{proof}

\section{Block Hankel Matrices} 

Now, we will explore the block Hankel matrices using tensor products. The block Hankel matrix is not only an interesting topic in both pure and applied mathematical research but also a well-developed branch. It is studied either independently or in conjunction with other problems, for example, the Hamburger moment problem. See \cite{25,24,22,23,21}. 

Given any operator system $\mathcal S$, we can easily see that the tensor product $\mathcal S \otimes \mathcal H_n$ is a $*$-complex vector space. Elements of the tensor product are called \emph{block Hankel matrices}.

\begin{thm}\label{24}

Given a $C^*$-algebra $\mathcal A$, let
\[
H=(h_{i+j})_{i,j=0}^{n}\in \mathcal H_n\otimes_{min} \mathcal A
\]
be a Hermitian block Hankel matrix, where $h_k\in \mathcal A$ for each $k$. Then $H$ is positive if and only if there exists a completely positive map
\[
\phi: \mathcal P_{2n} \longrightarrow \mathcal A
\]
such that $\phi(t^k)=h_k$, for each $0\leq k\leq 2n$. 

\end{thm}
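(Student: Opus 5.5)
Both directions will go through a concrete representation of $\mathcal A$: fix a faithful representation $\mathcal A\subseteq B(K)$. Since the minimal tensor product is injective, $\mathcal H_n\otimes_{min}\mathcal A$ sits completely order isomorphically inside $M_{n+1}\otimes_{min} B(K)=B(K^{n+1})$. Hence $H=(h_{i+j})$ is positive in $\mathcal H_n\otimes_{min}\mathcal A$ if and only if the block operator matrix $(h_{i+j})_{i,j=0}^n$ is positive on $K^{n+1}$. We will use this identification in both directions.

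\emph{($\Leftarrow$)} Suppose $\phi:\mathcal P_{2n}\to\mathcal A$ is completely positive with $\phi(t^k)=h_k$. As in the proof of Theorem \ref{Iso}, consider $M(t)=(t^{i+j})_{i,j=0}^n=v(t)^*v(t)$. This is positive for every real $t$, so $M\in M_{n+1}(\mathcal P_{2n})^+$. Then
\[
\phi^{(n+1)}(M)=(h_{i+j})_{i,j=0}^n\ge 0,
\]
and by the identification above $H$ is positive.

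\emph{($\Rightarrow$)} Suppose $H\ge0$. Define the linear map $\phi:\mathcal P_{2n}\to\mathcal A$ by $\phi(t^k)=h_k$ on the monomial basis; the task is to show it is completely positive. Fix $r$ and $P=\sum_k p_kt^k\in M_r(\mathcal P_{2n})^+$. By the Matrix Fej\'er--Riesz theorem (Theorem \ref{14}), $P=Q^*Q$ with $Q=\sum_{i=0}^n q_it^i$, so
\[
\phi^{(r)}(P)=\sum_{i,j=0}^n q_i^*q_j\otimes h_{i+j}\in M_r(\mathcal A).
\]
This equals $V^*\,(I_r\otimes(h_{i+j})_{i,j})\,V$ after reordering tensor factors, where $V:\mathbb C^r\otimes K\to(\mathbb C^r\otimes K)^{n+1}$ is the column $\xi\mapsto((q_i\otimes I_K)\xi)_{i=0}^n$. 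Equivalently, run the same vector computation as in Theorem \ref{Iso} with $\eta_i=(q_i\otimes I_K)\xi$:
\[
\left\langle \phi^{(r)}(P)\xi,\xi\right\rangle=\sum_{i,j}\left\langle (I_r\otimes h_{i+j})\eta_j,\eta_i\right\rangle\ge0 .
\]
Since $M_r(\mathcal A)\subseteq B(\mathbb C^r\otimes K)$ has the induced order, $\phi^{(r)}(P)\ge0$. As $r$ was arbitrary, $\phi$ is completely positive.

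An alternative for ($\Rightarrow$) uses Theorem \ref{Iso} directly: $\mathcal H_n\otimes_{min}\mathcal A$ embeds in $CP(\mathcal H_n^d,\mathcal A)=CP(\mathcal P_{2n},\mathcal A)$ for finite-dimensional $\mathcal H_n$. The explicit computation above avoids tensor-duality bookkeeping, so I will use it.

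The main point of care is the first step: identifying positivity of $H$ in $\mathcal H_n\otimes_{min}\mathcal A$ with positivity of the concrete block matrix. This relies on injectivity of $\otimes_{min}$ and on $\mathcal H_n$ carrying the order inherited from $M_{n+1}$. The order unit choice $\eta$ plays no role, since only the cones matter. After that, everything reduces to the Fej\'er--Riesz factorization already used in Theorem \ref{Iso}.
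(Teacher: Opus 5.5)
Your proof is correct, but it takes a more hands-on route than the paper.

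\textbf{The paper's route.} The paper's argument is structural and only a few lines long. It combines Theorem~\ref{Iso} ($\mathcal H_n\cong\mathcal P_{2n}^d$) with the general identification, for finite-dimensional $S$, of positive elements of $S^d\otimes_{min}\mathcal A$ with completely positive maps $S\to\mathcal A$. Under this identification $H=\sum_k b_k\otimes h_k$ corresponds to the map $t^k\mapsto h_k$, and positivity transfers in both directions at once. This is shorter and shows the theorem is a formal consequence of the isomorphism. However, it imports the tensor/completely-positive-map duality for an arbitrary $C^*$-algebra $\mathcal A$, which the paper cites without proof. It also leaves implicit what positivity in $\mathcal H_n\otimes_{min}\mathcal A$ concretely means.

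\textbf{Your route.} You represent $\mathcal A\subseteq B(K)$ and use injectivity of $\otimes_{min}$ to reduce positivity of $H$ to positivity of the block operator matrix $(h_{i+j})$ on $K^{n+1}$. You then rerun both halves of the proof of Theorem~\ref{Iso} with $\mathcal A$-valued coefficients:
\begin{itemize}
\item one direction uses $M(t)=v(t)^*v(t)$;
\item the other uses the Fej\'er--Riesz factorization and the compression $V^*\bigl(I_r\otimes(h_{i+j})\bigr)V$.
\end{itemize}
This is self-contained: it needs only injectivity of the minimal tensor product and Theorem~\ref{14}. It also makes explicit the concrete block-matrix positivity that the statement tacitly relies on.

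\textbf{One step to tighten.} The point you flagged needs a precise justification. The inclusion $\mathcal H_n\subseteq M_{n+1}$ is not unital for $n\ge 2$, while injectivity of $\otimes_{min}$ is usually stated for unital complete order embeddings. Rather than appealing to ``only the cones matter,'' apply injectivity to $x\mapsto \eta^{-1/2}x\eta^{-1/2}$. This is a unital complete order embedding of $(\mathcal H_n,\eta)$ onto an operator subsystem of $(M_{n+1},I_{n+1})$. Congruence by the invertible $\eta^{-1/2}\otimes 1$ preserves and reflects positivity in $M_{n+1}(\mathcal A)$, so your identification follows exactly as you claim.
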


\begin{proof}

Any element $H\in \mathcal H_n \otimes \mathcal{A}$ can be expressed as
\[
H=\sum_{k=0}^{2n} b_k\otimes h_k,
\]
where $\{b_k\}$ is the basis of $\mathcal H_n$. By the finite-dimensional duality between positive elements of $S^d\otimes_{min} \mathcal A$ and completely positive maps $S\to \mathcal A$, and by Theorem \ref{Iso}, the positivity of $H$ is equivalent to the complete positivity of the map
\[
\widetilde{\phi_H}:\mathcal P_{2n}\longrightarrow \mathcal A,
\qquad
\widetilde{\phi_H}\left(\sum_{k=0}^{2n}a_kt^k\right)=\sum_{k=0}^{2n}a_kh_k.
\]
In particular, taking $p(t)=t^k$, we get $\widetilde{\phi_H}(t^k)=h_k$. The converse is the same argument in reverse. \qedhere

\end{proof}

\begin{cor}\label{22}
Given any block Hankel matrix
\[
H=(h_{i+j})_{i,j=0}^{n} \in \mathcal H_n\otimes M_{m}(\C),
\]
where each $h_k\in M_m(\C)$, the positivity of $H$ is equivalent to the following condition: for every $r\in\mathbb N$ and every positive matrix polynomial
\[
P(t)=\sum_{k=0}^{2n}a_kt^k\in M_r(\mathcal P_{2n})^+,
\]
one has
\[
\sum_{k=0}^{2n} a_k\otimes h_k\geq0
\]
in $M_r\otimes M_m$.

\end{cor}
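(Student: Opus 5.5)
Corollary \ref{22} is the special case $\mathcal A=M_m(\C)$ of Theorem \ref{24}, once complete positivity of the map $t^k\mapsto h_k$ is written out level by level. I will specialize Theorem \ref{24} and then unpack the definition of complete positivity on $\mathcal P_{2n}$. As a fallback, I would argue directly with the computation from the proof of Theorem \ref{Iso}.

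\emph{Step 1.} With $\mathcal A=M_m(\C)$, Theorem \ref{24} says that $H=(h_{i+j})_{i,j=0}^n$ is positive in $\mathcal H_n\otimes_{\min} M_m$ if and only if the linear map $\psi:\mathcal P_{2n}\to M_m$ determined by $\psi(t^k)=h_k$ is completely positive. Since $\mathcal H_n\subseteq M_{n+1}$ carries the inherited matrix cones, positivity in $\mathcal H_n\otimes_{\min}M_m$ is ordinary positivity of the block matrix in $M_{n+1}(M_m)$. The only candidate map is $\psi$, because the monomials $t^k$ form a basis of $\mathcal P_{2n}$. So the existence statement in Theorem \ref{24} collapses to complete positivity of this one map.

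\emph{Step 2.} Complete positivity of $\psi$ means that for every $r$ and every $P=\sum_k a_k t^k\in M_r(\mathcal P_{2n})^+$, the element $\psi^{(r)}(P)=\sum_k a_k\otimes h_k$ is positive in $M_r\otimes M_m$. Here $\psi^{(r)}$ is identified with $\mathrm{id}_{M_r}\otimes\psi$, and the positive cone $M_r(\mathcal P_{2n})^+$ is exactly $C_r'$, the matrix polynomials that are positive semidefinite at every $t\in\R$. This is precisely the stated condition.

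\emph{Fallback, directly.} If $H\geq 0$, factor $P=Q^*Q$ by Theorem \ref{14}, so that $a_k=\sum_{i+j=k}q_i^*q_j$. The inner-product computation in the proof of Theorem \ref{Iso} then shows $\sum_k a_k\otimes h_k\geq 0$. Conversely, if the condition holds, apply it with $r=n+1$ to $M(t)=(t^{i+j})_{i,j=0}^n=v(t)^*v(t)\geq 0$. This gives $\sum_k a_k\otimes h_k=(h_{i+j})_{i,j=0}^n=H$ up to the canonical shuffle $M_{n+1}\otimes M_m\cong M_{n+1}(M_m)$, so $H\geq 0$.

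The main obstacle is bookkeeping rather than analysis. One must check that the tensor-leg ordering $a_k\otimes h_k$ and the canonical shuffle are consistent with the identification used in Theorem \ref{Iso}, and that the $\min$ cone on $\mathcal H_n\otimes M_m$ is the concrete block-positivity cone. The latter holds because $\mathcal H_n$ is concretely embedded in $M_{n+1}$, where the min tensor product is spatial.
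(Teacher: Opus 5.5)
Your proposal is correct and follows essentially the same route as the paper. The paper also identifies positivity of $H$ with complete positivity of the map $\mathcal P_{2n}\to M_m$, $t^k\mapsto h_k$, via the proof of Theorem~\ref{Iso}, and then reads off the condition by testing that map on an arbitrary $P\in M_r(\mathcal P_{2n})^+$. Your fallback is exactly that proof: Fej\'er--Riesz factorization for one direction, and the test matrix $(t^{i+j})_{i,j=0}^n$ at level $r=n+1$ for the other. It also has the small advantage of not needing the Hermitian hypothesis that Theorem~\ref{24} assumes.
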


\begin{proof}

This is a direct application of the proof of Theorem \ref{Iso}. The block Hankel matrix $H=(h_{i+j})$ is positive if and only if the associated map
\[
\mathcal P_{2n}\longrightarrow M_m,\qquad t^k\longmapsto h_k,
\]
is completely positive. Testing complete positivity on an arbitrary positive element $P(t)=\sum a_kt^k\in M_r(\mathcal P_{2n})^+$ gives exactly the condition
\[
\sum_{k=0}^{2n}a_k\otimes h_k\geq0. \qedhere
\]

\end{proof} 

\begin{thm}\label{25}

Given a $C^*$-algebra $\mathcal A$, let $H=(h_{i+j}) \in \mathcal H_n \otimes_{min} \mathcal A$ be a positive block Hankel matrix, then there exists a completely positive linear map:
$$\phi: \mathcal H_n \otimes_{min} \mathcal P_{2n}\longrightarrow \mathcal H_n \otimes_{min} \mathcal A,$$
such that $\phi(M)=H$, where $M=\sum_{k=0}^{2n}b_k\otimes t^k=(t^{i+j})_{i,j=0}^{n}$.

\end{thm}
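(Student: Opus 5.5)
Since $H$ is positive in $\mathcal H_n\otimes_{min}\mathcal A$, Theorem \ref{24} gives a completely positive map $\psi:\mathcal P_{2n}\to\mathcal A$ with $\psi(t^k)=h_k$ for $0\le k\le 2n$. We take
\[
\phi:=\mathrm{id}_{\mathcal H_n}\otimes\psi:\mathcal H_n\otimes_{min}\mathcal P_{2n}\longrightarrow \mathcal H_n\otimes_{min}\mathcal A .
\]
On the element $M$ this gives
\[
\phi(M)=\sum_{k=0}^{2n} b_k\otimes\psi(t^k)=\sum_{k=0}^{2n}b_k\otimes h_k=(h_{i+j})_{i,j=0}^n=H.
\]
Before using $M$ we check that it really is the matrix $(t^{i+j})_{i,j=0}^n$ of the statement; this holds because $b_k$ has ones exactly on the $k$-th anti-diagonal.

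The main point is that $\phi$ is completely positive. We rely on the functoriality of the minimal tensor product: if $\varphi_1:S_1\to T_1$ and $\varphi_2:S_2\to T_2$ are completely positive, then $\varphi_1\otimes\varphi_2:S_1\otimes_{min}S_2\to T_1\otimes_{min}T_2$ is completely positive (Kavruk--Paulsen--Todorov--Tomforde). Here $\mathrm{id}_{\mathcal H_n}$ is trivially completely positive and $\psi$ is completely positive, so $\phi$ is completely positive.

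If we want to avoid quoting that result, we argue directly. Represent $\mathcal H_n\subseteq M_{n+1}$ concretely, and represent $\mathcal A\subseteq B(K)$. The minimal tensor product is the spatial one, $\mathcal H_n\otimes_{min}\mathcal P_{2n}\subseteq M_{n+1}(C(\mathbb S(\mathcal P_{2n})))$, and $\mathcal H_n\otimes_{min}\mathcal A\subseteq M_{n+1}(\mathcal A)$. With these identifications, $\phi$ is the restriction of $\psi^{(n+1)}$ to block Hankel matrices. Its amplifications are amplifications of $\psi$, so they are positive because $\psi$ is completely positive.

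We also note that $M$ is indeed positive in $\mathcal H_n\otimes_{min}\mathcal P_{2n}$. This follows from $M(t)=v(t)^*v(t)\ge 0$ for all $t\in\mathbb R$, with $v(t)=(1,t,\dots,t^n)$, together with Proposition \ref{OMIN}. This confirms the statement is consistent, although the existence of $\phi$ does not depend on it.

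The step most likely to need care is matching the minimal tensor product of these operator systems with the concrete block-matrix picture, so that $\mathrm{id}\otimes\psi$ is literally $\psi^{(n+1)}$ restricted to block Hankel matrices. If this identification causes trouble, we fall back on the abstract functoriality of $\otimes_{min}$ stated above.
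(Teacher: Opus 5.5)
Your proposal is correct and is essentially the paper's proof. The paper likewise sets $\phi = I_{\mathcal H_n}\otimes\widetilde{\phi_H}$, with $\widetilde{\phi_H}$ the completely positive map from Theorem \ref{24}, and evaluates it on $M=\sum_k b_k\otimes t^k$ to get $H$; the paper only asserts that this tensor map is completely positive, while your appeal to functoriality of $\otimes_{min}$ (or to the block-matrix picture) supplies that justification.
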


\begin{proof}

We can express any positive $H=(h_{i+j}) \in \mathcal H_n \otimes_{min} \mathcal A$ as $H=\sum_{i=0}^{2n} b_i\otimes h_i$. Set $\phi= I_{\mathcal H{_n}}\otimes \widetilde{\phi_H}$, where $I_{\mathcal H{_n}}$ is the identity map on $\mathcal H_n$. Then $\phi$ is a completely positive map and we see that
$$\phi(M)= I_{\mathcal H{_n}}\otimes \widetilde{\phi_H}(\sum_{i=0}^{2n}b_i\otimes t^i)=\sum_{i=0}^{2n}b_i\otimes h_i=H. \qedhere$$ 

\end{proof}

\begin{remark}\label{27}

We can also consider the matrix M in Theorem \ref{25} as a function $M(t): \R \longrightarrow M_{n+1}(\C)$. $M(t)= v(t)v(t)^*$, where $v(t)= (1, t, t^2, \cdots, t^n )^T$. This function maps the real line into a rank-one positive semi-definite $(n+1)\times (n+1)$ matrix. Due to Theorem \ref{24}, we can call this Hankel matrix valued function $M(t)$  the universal positive Hankel matrix.
    
\end{remark}

\section*{Acknowledgments}

The authors were partially supported by NSF grant DMS-2055155.

\section*{Methods}

OpenAI's ChatGPT 5.6 Sol and ChatGPT 6 Astra were used at the drafting stage to assist in checking arguments and for copyediting. All statements and proofs are the sole work of the authors.

\bibliographystyle{amsra}
\bibliography{mybib}

\end{document}